\definecolor{vegasgold}{rgb}{0.77, 0.7, 0.35}
\definecolor{darkgoldenrod}{rgb}{0.72, 0.53, 0.04}
\definecolor{gold(metallic)}{rgb}{0.83, 0.69, 0.22}
\DeclareFontFamily{U}{wncy}{}
\DeclareFontShape{U}{wncy}{m}{n}{<->wncyr10}{}
\DeclareSymbolFont{mcy}{U}{wncy}{m}{n}
\DeclareMathSymbol{\Sh}{\mathord}{mcy}{"58}
\newtheorem{theorem}{Theorem}[section]
\newtheorem{lemma}[theorem]{Lemma}
\newtheorem{ass}[theorem]{Assumption}
\newtheorem*{theorem*}{Theorem}
\newtheorem*{ass*}{Assumption}
\newtheorem{definition}[theorem]{Definition}
\newtheorem{corollary}[theorem]{Corollary}
\newtheorem{conjecture}[theorem]{Conjecture}
\newtheorem{proposition}[theorem]{Proposition}
\newcommand{\cF}{\mathcal{F}}
\newcommand{\Zp}{\mathbb{Z}_p}
\newcommand{\Kant}{K_{\operatorname{anti}}}
\newcommand{\cH}{\mathcal{H}}
\newcommand{\Z}{\mathbb{Z}}
\newcommand{\p}{\mathfrak{p}}
\newcommand{\Q}{\mathbb{Q}}
\newcommand{\F}{\mathbb{F}}
\newcommand{\cO}{\mathcal{O}}
\newcommand{\corank}{\mathrm{corank}}
\newcommand{\op}[1]{\operatorname{#1}}
\numberwithin{equation}{section}
\begin{document}

\title[Hilbert's tenth problem in Anticyclotomic towers]{Hilbert's tenth problem in Anticyclotomic towers of number fields}

\author[A.~Ray]{Anwesh Ray}
\address[Ray]{Chennai Mathematical Institute, H1, SIPCOT IT Park, Kelambakkam, Siruseri, Tamil Nadu 603103, India}
\email{ar2222@cornell.edu}

\author[T.~Weston]{Tom Weston}
\address[Weston]{Department of Mathematics, University of Massachusetts, Amherst, MA, USA.} 
  \email{weston@math.umass.edu}

\keywords{Anticyclotomic Iwasawa theory, Selmer groups, Hilbert's tenth problem, congruences between elliptic curves}
\subjclass[2020]{11R23, 11U05}

\maketitle

\begin{abstract}
Let $K$ be an imaginary quadratic field and $p$ be an odd prime which splits in $K$. Let $E_1$ and $E_2$ be elliptic curves over $K$ such that the $\op{Gal}(\bar{K}/K)$-modules $E_1[p]$ and $E_2[p]$ are isomorphic. We show that under certain explicit additional conditions on $E_1$ and $E_2$, the anticyclotomic $\Z_p$-extension $\Kant$ of $K$ is integrally diophantine over $K$. When such conditions are satisfied, we deduce new cases of Hilbert's tenth problem. In greater detail, the conditions imply that Hilbert's tenth problem is unsolvable for all number fields that are contained in $\Kant$. We illustrate our results by constructing an explicit example for $p=3$ and $K=\Q(\sqrt{-5})$.  
\end{abstract}

\section{Introduction}
 \par Hilbert asked whether there exists a deterministic algorithm for determining the existence of integral solutions to diophantine equations with integer coefficients. In other words, the problem asks whether there exists a Turing machine which takes as input a finite set of polynomial equations over $\Z$ and decides whether they have nontrivial solutions in $\Z$. If no such Turing machine does exist, then it is said that Hilbert's tenth problem is unsolvable. For further details, we refer to \cite{shlapentokh2007hilbert}. A celebrated result due to Matiyasevich \cite{matiyasevich1970diophantineness}, building upon previous work of Davis, Putnam and Robinson \cite{davis1961decision}, shows that Hilbert's tenth problem for diophantine equations with integer coefficients is unsolvable. One of the main open problems in the area is to study the generalization of Hilbert's tenth problem to number rings.  For a number field $L$, generally this is studied by showing that $\Z$ is a 
 \emph{integrally diophantine} subset of $\cO_L$, in the sense of Definition \ref{diophantine subset}.  It is then known that $\cO_L$ inherits the unsolvability of Hilbert's tenth problem from $\Z$.  The following conjecture predicts that Hilbert's tenth problem is unsolvable for all rings of integers of number fields.

\begin{conjecture}[Denef-Lipshitz]\label{conj denef lip}
For any number field $L$, $L/\Q$ is an integrally diophantine extension. 
\end{conjecture}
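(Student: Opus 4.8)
The plan is to deduce the conjecture from the \emph{elliptic curve method}, which converts it into a purely Diophantine-geometric assertion about how ranks of elliptic curves behave under base change. Recall the foundational result (Poonen; Cornelissen--Pheidas--Zahidi; Shlapentokh, see \cite{shlapentokh2007hilbert}): if $F'/F$ is an extension of number fields and there exists an elliptic curve $E/F$ with
\[ \rank E(F) = \rank E(F') \geq 1, \]
then $\cO_F$ is an integrally diophantine subset of $\cO_{F'}$, so $F'/F$ is an integrally diophantine extension; moreover integrally diophantine extensions compose (transitivity). Consequently, to establish the conjecture for a given $L$ it is enough either (i) to exhibit a single elliptic curve $E/\Q$ with $\rank E(\Q) = \rank E(L) = 1$, or, more flexibly, (ii) to choose a maximal chain of intermediate fields $\Q = F_0 \subsetneq F_1 \subsetneq \cdots \subsetneq F_n = L$ and for each $i$ an elliptic curve $E_i/F_{i-1}$ with $\rank E_i(F_{i-1}) = \rank E_i(F_i) \geq 1$. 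In route (ii) one may moreover use the classical unconditional base cases --- $F/\Q$ totally real (Denef), a quadratic extension of a totally real field (Denef--Lipshitz), or with one complex place (Pheidas) --- as the bottom of the chain, so that the weight of the conjecture rests entirely on steps past these.

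Thus the whole content of the conjecture is concentrated in the existence of elliptic curves of stable positive rank. The first reduction I would make is to rank exactly one and to quadratic steps: writing $F' = F(\sqrt{d})$, one has $\rank E(F') = \rank E(F) + \rank E^{(d)}(F)$ for the quadratic twist $E^{(d)}$, so it suffices to produce $E/F$ with $\rank E(F) = 1$ and $\rank E^{(d)}(F) = 0$ simultaneously --- equivalently, a rank-one curve over $F$ whose $d$-twist has trivial Mordell--Weil rank, or, for a cleaner $2$-descent, a curve carrying a rational $2$-isogeny arranged so that the relevant Selmer groups over $F$ and over $F'$ coincide. What is needed is simultaneous control of two Selmer groups as $E$ varies with $d$ \emph{fixed} (since $d$ is prescribed by the extension $F'/F$). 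For $F = \Q$ and favorable $d$ this can be extracted from the theory of the distribution of $2$-Selmer ranks in twist families; for $L$ lying inside an anticyclotomic (or cyclotomic) $\Zp$-tower it can instead be obtained by the congruence-propagation method of the present paper, transporting a known instance of rank stability along the tower via an isomorphism $E_1[p] \cong E_2[p]$ of residual $\op{Gal}(\bar K/K)$-representations together with control of the anticyclotomic Selmer group.

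The main obstacle --- and the reason the conjecture is hard --- is to make this rank-stability input uniform over \emph{all} number fields and unconditional. Conditionally on finiteness of Tate--Shafarevich groups, Mazur--Rubin's Diophantine stability theorem supplies, for any $F'/F$, an elliptic curve over $F$ of rank one whose rank is unchanged over $F'$, which would close the argument; unconditionally, the required non-growth of rank for a \emph{prescribed} twisting parameter is a delicate joint-equidistribution statement about $2$-Selmer ranks that does not follow from the Birch--Swinnerton-Dyer heuristics taken alone, and for primitive non-Galois steps $F'/F$ the relevant object is the Mordell--Weil module of $E$ over $F'$ viewed as a representation of $\op{Gal}(F'/F)$, whose rank behaviour is even less accessible. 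A complete unconditional proof therefore requires new input of exactly this flavour --- the additive-combinatorial control of Rédei symbols and governing fields recently developed by Koymans and Pagano, or a suitable analogue --- and the final step of the program sketched above is precisely to invoke such an input.
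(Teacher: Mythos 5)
The statement you are asked to prove is Conjecture \ref{conj denef lip}, which is an open conjecture: the paper does not prove it and does not claim to. It only establishes special cases, namely for the layers $K_n$ of certain anticyclotomic $\Z_p$-towers under the hypotheses of Theorem \ref{main theorem}. So there is no proof in the paper to compare yours against, and your proposal --- by its own admission in the final paragraph --- is not a proof either. It is a reduction of the conjecture to an unproved rank-stability statement, followed by the observation that the required input (unconditional existence, for every extension $F'/F$, of an elliptic curve over $F$ of positive rank that does not grow over $F'$) is not currently available. That is the genuine gap, and it is the entire content of the conjecture: Shlapentokh's criterion (Theorem \ref{shlap 1}) plus transitivity of integrally diophantine extensions is exactly the standard reduction, and it is also the strategy the paper follows for its special cases, but invoking Mazur--Rubin conditionally on finiteness of Tate--Shafarevich groups, or gesturing at $2$-Selmer equidistribution and R\'edei-symbol technology, does not close the argument unconditionally.

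Two smaller points. First, your reduction to quadratic steps via $\op{rank}E(F') = \op{rank}E(F) + \op{rank}E^{(d)}(F)$ only handles extensions built from a chain of quadratic subextensions; a general number field $L/\Q$ need not admit such a chain (e.g.\ a non-Galois quintic field with no proper intermediate field), so even the reduction as stated is incomplete --- you acknowledge this obliquely with the remark about ``primitive non-Galois steps,'' but then the twist decomposition is unavailable and one must work with the full Mordell--Weil module as a Galois representation, for which no analogous descent is known. Second, if your goal were to reproduce what the paper actually establishes, the correct target is not the conjecture itself but Theorem \ref{main theorem}: there the missing rank-stability input is supplied, for the specific fields $K_n$, by anticyclotomic Iwasawa theory --- a congruence $E_1[p]\cong E_2[p]$ with a rank-zero curve whose anticyclotomic Selmer group has trivial $\mu$- and $\lambda$-invariants forces $\lambda_p(E_2/\Kant)=\op{rank}E_2(K)$, which bounds $\op{rank}E_2(K_n)$ from above by $\op{rank}E_2(K)$ for all $n$. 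Your sketch mentions this mechanism in passing but does not carry it out.
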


 Classical results in the field show that Conjecture \ref{conj denef lip} above
is true for the following classes of number fields $L$:
\begin{itemize}
    \item The conjecture holds when $L$ is either totally real or a quadratic extension of a totally real number field (cf. \cite{denef1980diophantine,denef1978diophantine}), 
    \item $L$ has exactly one complex place (cf. \cite{pheidas1988hilbert, shlapentokh1989extension, videla16decimo}),
    \item $L/\Q$ is abelian (cf. \cite{shapiro1989diophantine}),
    \item $[L:\Q]=4$, $L$ is not totally real and $L/\Q$ has a proper intermediate field (cf. \cite{denef1978diophantine}).
\end{itemize}
Poonen \cite{poonen2002using}, Cornelissen-Pheidas-Zahidi \cite{cornelissen2005division} and Shlapentokh \cite{shlapentokh2008elliptic} subsequently established a geometric criterion for Conjecture \ref{conj denef lip} to hold for a given number field $L/\Q$. In establishing some new cases of Conjecture \ref{conj denef lip}, we use the following criterion of Shlapentokh.
\begin{theorem}[Shlapentokh]\label{shlap 1}
Let $L/K$ be an extension of number fields and suppose that there exists an elliptic curve $E_{/K}$ such that $\op{rank}E(L)=\op{rank}E(K)>0$. Then, $L/K$ is integrally diophantine.
\end{theorem}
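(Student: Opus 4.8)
The plan is to unwind Definition~\ref{diophantine subset}: the statement that $L/K$ is integrally diophantine is exactly the assertion that $\cO_K$ is a diophantine subset of $\cO_L$, and this is what I would set out to prove. The elliptic curve $E$ and the equal-rank hypothesis enter through the circle of ideas of Denef, Poonen, and Cornelissen--Pheidas--Zahidi: one uses a $K$-rational point of infinite order to build a diophantine model of the ring $\Z$ inside $\cO_L$, and then the fact that the coordinates of the relevant points already lie in $K$ lets one carve $\cO_K$ out of $\cO_L$.

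First I would fix a Weierstrass model of $E$ over $\cO_K$ and a point $P \in E(K)$ of infinite order, which exists since $\rank E(K) > 0$; after replacing $P$ by a suitable nonzero multiple I may assume that $P$ lies in the kernel of reduction at some chosen auxiliary prime $\mathfrak q$ of $K$ of good reduction. The equal-rank hypothesis is used only once, but crucially: since $\rank E(L) = \rank E(K)$, the inclusion $E(K)\hookrightarrow E(L)$ becomes an isomorphism after $-\otimes_{\Z}\Q$, so $E(L)/\bigl(E(K)+E(L)_{\tor}\bigr)$ is finite; in particular some explicit finite-index subgroup of $E(L)$ is contained in $E(K)+E(L)_{\tor}$. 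This is what prevents the curve-theoretic part of the construction below from ``leaking'' genuinely $L$-rational data that is not visible over $K$.

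Next I would construct the diophantine model of $(\Z;+,\cdot)$. Writing $[n]P = \bigl(A_n/B_n^2,\ C_n/B_n^3\bigr)$ in lowest terms, the ideals $(B_n)$ form an elliptic divisibility sequence, and for each place $w$ of $L$ above $\mathfrak q$ the theory of formal groups yields a clean valuation formula $\ord_w(B_n) = \ord_w(B_1) + e_{w/\mathfrak q}\,\ord_{\mathfrak q}(n)$ valid for all $n \ge 1$ away from a finite set of places that I would track explicitly. Consequently the $w$-adic denominator of $x([n]P)$ records $\ord_{\mathfrak q}(n)$, and by playing off several such primes against one another the divisibility relation $m \mid n$ on positive integers becomes existentially definable in terms of divisibility of denominator ideals, which is itself diophantine. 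Feeding this into the polynomial addition formulas on $E$ and the Weierstrass equation, I would then write down a polynomial system over $\cO_L$ whose solution set parametrizes $\{\,n \mapsto [n]P\,\}$ together with the graphs of addition and multiplication of the index $n$, i.e.\ a diophantine model of $\Z$ with both of its operations; this is the step that draws most heavily on the existing literature.

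Finally I would cut out $\cO_K$. Fixing an integral basis of $\cO_L$ over $\cO_K$, an element $t \in \cO_L$ is written uniquely as an $\cO_K$-combination of basis elements; using the diophantine model of $\Z$ to express ``being an element of $\cO_K$'' as a condition on coefficients, and using the equal-rank hypothesis to ensure that the $L$-rational solutions of the curve-theoretic part cannot produce more than the $K$-rational quantities one expects, one obtains a diophantine definition of $\cO_K$ inside $\cO_L$, which is the claim. I expect the main obstacle to be precisely this interface: on the one hand, proving that the elliptic divisibility sequence behaves according to the clean formula at all but a controlled finite set of places of $L$, and on the other, checking that the finitely many torsion classes in $E(L)_{\tor}$ and the finite index of $E(K)+E(L)_{\tor}$ in $E(L)$ can all be absorbed into the polynomial system without accidentally enlarging the defined subset of $\cO_L$ beyond $\cO_K$. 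The formal-group and divisibility-sequence computations are essentially routine; the bookkeeping that makes the final defined set exactly $\cO_K$ is the delicate part.
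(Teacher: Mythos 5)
First, a remark on the comparison itself: the paper does not prove Theorem \ref{shlap 1} at all --- it is quoted as a known criterion from \cite{shlapentokh2008elliptic} (with antecedents in \cite{poonen2002using} and \cite{cornelissen2005division}). So your attempt must be measured against the published proof, and there it diverges in a way that leaves a genuine gap. The statement to be proved is that $\cO_K$ is a \emph{diophantine subset} of $\cO_L$. Your plan is to build a diophantine \emph{model} of $(\Z,+,\cdot)$ inside $\cO_L$ via the elliptic divisibility sequence and then, in the last paragraph, to ``use the diophantine model of $\Z$ to express being an element of $\cO_K$ as a condition on coefficients.'' A diophantine model is only an interpretation of $\Z$ by coded tuples and diophantine relations; it does not exhibit $\Z$ (i.e.\ the actual elements $n\cdot 1$) as a diophantine subset of $\cO_L$, and without that you cannot existentially quantify over the integer coefficients of an element $t\in\cO_L$ with respect to a basis. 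If you did have $\Z$ as a diophantine subset of $\cO_L$, you would have proved Conjecture \ref{conj denef lip} for $L$ outright, which is far stronger than the relative statement and is exactly what is not known. As written, your construction would at best yield undecidability of Hilbert's tenth problem for $\cO_L$, not that $L/K$ is integrally diophantine --- and the paper genuinely needs the latter (it composes $K_n/K$ with $K/\Q$ by transitivity of integrally diophantine extensions). A smaller point in the same paragraph: $\cO_L$ need not be free as an $\cO_K$-module, so ``an integral basis of $\cO_L$ over $\cO_K$'' need not exist.

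The second, related, problem is that the hypothesis $\op{rank}E(L)=\op{rank}E(K)>0$ enters your argument only as a vague ``no leakage'' safeguard, whereas in Shlapentokh's proof it is the engine of the definition of $\cO_K$. Since $E(L)/E(K)$ is finite, say of exponent dividing $m$, the subgroup $[m]E(L)$ is existentially definable over $\cO_L$ (a point $Q$ lies in it iff $\exists R\in E(L)$ with $Q=[m]R$, all expressed through the Weierstrass and addition formulas), and every point of $[m]E(L)$ has coordinates in $K$. This gives a diophantine-over-$\cO_L$ supply of $K$-rational quantities; the formal-group/divisibility-sequence estimates you describe are then used not to define divisibility of indices, but to produce congruences modulo denominator ideals of arbitrarily large norm, and the ``weak vertical method'' (large-norm congruence to a $K$-rational quantity plus a height bound forces an element of $\cO_L$ to lie in $\cO_K$; see \cite{shlapentokh2007hilbert, shlapentokh2008elliptic}) is what actually carves $\cO_K$ out of $\cO_L$. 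No diophantine model of $\Z$ is constructed, and none is needed. To repair your sketch you would have to replace the final ``coefficient'' step by this finite-index-plus-weak-vertical mechanism; the divisibility-sequence computations you outline are compatible with it, but by themselves they do not reach the conclusion of the theorem.
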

We mention in passing that this condition has been recently generalized to the context of abelian varieties by Mazur, Rubin and Shlapentokh \cite{MRS2023}. Let $K$ be a number field for which Conjecture \ref{conj denef lip} holds and let $L/K$ be a number field extension. Suppose that there exists an elliptic curve such that $\op{rank}E(L)=\op{rank}E(K)>0$. Then, it follows that Conjecture \ref{conj denef lip} holds for $L$. Theorem \ref{shlap 1} has been applied in various contexts. For instance, Garcia-Fritz and Pasten \cite{garcia2020towards} established that for certain families of primes $\mathscr{P}$ and $\mathscr{Q}$ with positive density, Conjecture \ref{conj denef lip} is satisfied for $L=\Q(p^{1/3}, \sqrt{-q})$ with $p \in \mathscr{P}$ and $q \in \mathscr{Q}$. Mazur and Rubin \cite{mazur2018diophantine} show that if the Conjecture \ref{conj denef lip} holds for a given number field $K$, then it also holds for certain infinite families of degree $p^n$ cyclic extensions $L/K$, where $p$ is a prime and $n\in \Z_{\geq 1}$.

\subsection{Main results}
\par In what follows, $p$ is a fixed odd prime number and $K=\Q(\sqrt{-D})$ is an imaginary quadratic field in which $p$ splits into two primes $\p$ and $\p^*$. Let $\Z_p$ denote the ring of $p$-adic integers. A $\Z_p$-extension of $K$ is an infinite Galois extension $K_\infty$ of $K$ such that $\op{Gal}(K_\infty/K)$ is isomorphic to $\Z_p$ as a topological group. The \emph{anticyclotomic}-extension $\Kant/K$ is the unique $\Z_p$-extension of $K$ which is Galois over $\Q$ such that $\op{Gal}(\Kant/\Q)$ is a non-abelian pro-dihedral group. Let $K_n$ be the $n$-th layer, i.e., the number field such that $K\subseteq K_n\subseteq \Kant$ and $[K:K_n]=p^n$. The group $\op{Gal}(K_n/\Q)$ is isomorphic to the dihedral group with $2p^n$ elements. The tower of fields 
\[K=K_0\subseteq K_1\subseteq \dots \subseteq K_n \subseteq K_{n+1}\subseteq \dots\] is referred to as the \emph{anticyclotomic $\Z_p$-tower} over $K$. The fields $K_n$ are totally imaginary non-abelian extensions of $\Q$, containing $p^n$ complex places. Conjecture \ref{conj denef lip} is not known to hold for the fields $K_n$, when $n>0$. We do however note that $K/\Q$ is known to be integrally diophantine (by the results above). If there exists an elliptic curve $E_{/K}$ such that $\op{rank}E(K_n)=\op{rank} E(K)>0$, then, it follows from Slapentokh's criterion (i.e., Theorem \ref{shlap 1}) that $K_n/K$ is integrally diophantine. Since $K/\Q$ is integrally diophantine, it shall then follow that Conjecture \ref{conj denef lip} is satisfied for $K_n/\Q$. We say that $\Kant/K$ is integrally diophantine if $K_n/K$ is integrally diophantine for all $n$. We note that if $\Kant/K$ is integrally diophantine, it follows that Conjecture \ref{conj denef lip} holds for $K_n$ for all $n$. We mention here that ideas from Iwasawa theory have been used to prove new cases of Hilbert's tenth problem in certain cyclotomic towers. This has been done in \cite[Example 7.1]{shlapentokh2008elliptic}. For related observations, cf. \cite{Ray2023HTP}.
 \par In order to study the stability of ranks of elliptic curves in anticyclotomic towers, we use techniques from the Iwasawa theory of Selmer groups. In order to state our main results, we introduce some further notation. Let $E_1$ and $E_2$ be elliptic curves defined over $K$. Fix an algebraic closure $\bar{K}$ of $K$, and set $E_i[p]$ to denote the $p$-torsion subgroup of $E_i(\bar{K})$. We assume that $E_1[p]$ and $E_2[p]$ are isomorphic as modules over $\op{Gal}(\bar{K}/K)$. Such a pair of elliptic curves $(E_1,E_2)$ shall be referred to as a \emph{$p$-congruent} pair of elliptic curves over $K$. Let $\mathfrak{T}$ be the set of primes $v$ of $K$ such that either $E_1$ or $E_2$ (or both) have bad reduction at $v$. Let $\mu_p\subset \bar{K}$ denote the $p$-th roots of unity and set $K':=K(\mu_p)$, i.e., $K'$ is the number field generated by $K$ and $\mu_p$. At a finite prime $v$ of $K$, denote by $\F_v$ the residue field of the ring of integers at $v$. If $E_{/K}$ is an elliptic curve with good reduction at $v$, we set $E(\F_v)$ to be denote the group of $\F_v$-points of a regular model of $E$. For $i=1,2$, we define a subset $\Sigma(E_i)$ of $\mathfrak{T}$ below, which will play a role in the statement of our results.

\begin{definition}
For $i=1,2$, the set $\Sigma(E_i)$ consists of primes $v\in \mathfrak{T}$ such that the following conditions is satisfied.
\begin{enumerate}
\item If the elliptic curve $E_i$ has good reduction at $v$, then, $p\mid \#E_i(\F_v)$.
\item If the elliptic curve $E_i$ has bad reduction at $v$, and $p\geq 5$, then $E_i$ has split multiplicative reduction at some prime $w|v$ of $K'$.
\item If the elliptic curve $E_i$ has bad reduction at $v$, and $p=3$, then, at some prime $w|v$ of $K'$, the elliptic curve $E_i$ has either split multiplicative reduction or has additive reduction of type $\rm{IV}$ or $\rm{IV}^*$. 
\end{enumerate}
\end{definition}
When referring to a prime $v$ of $K$, it shall be understood, unless stated otherwise, that one is referring to a finite prime. Let $v$ be a prime of $K$ and let $\ell$ be the prime number for which $v|\ell$. The prime $v$ is finitely decomposed in $\Kant$ if and only if $\ell$ splits in $K$ into two distinct conjugate primes $v$ and $v^*$.  If $v$ is finitely decomposed in $\Kant$ (i.e., $\ell$ splits in $K$), then we set $s_v$ to denote the number of primes of $\Kant$ that lie above $v$.  Clearly $s_v = s_{v^*}$.  We note that \cite[Theorem 2, p. 2133]{brink2007prime} gives an explicit algorithm to compute the number $s_v$. For an elliptic curve $E_{/K}$, set $V_p(E):=\op{T}_p(E)\otimes_{\Z_p} \Q_p$, where $T_p(E)$ is the $p$-adic Tate module associated to $E$. Let $\sigma_v$ denote the Frobenius in $\op{Gal}(K_v^{\op{nr}}/K_v)$ and set $\op{I}_v$ to denote the inertia group at $v$. Let $V_p':=(V_p)_{\op{I}_v}$ be the maximal quotient of $V_p$ on which $\op{I}_v$ acts trivially. Let $P_v(X)=P_v^{E}(X)\in \Z_p[X]$ denote the characteristic polynomial $\op{det}\left(1-\sigma_{v|V_p'}X\right)$. When $E$ has good reduction at $v$, then, $P_v(X)$ is given by $P_v(X)=1-a_v(E) X+q_v X^2$, where, $q_v$ is the number of elements in $\F_v$, and $a_v(E):=q_v+1-\#E(\F_v)$. When $E$ has multiplicative reduction at $v$, then, $P_v(X)=1-a_v(E) X$, where $a_v(E)=+1$ (resp. $-1$) if $E$ has split (resp. non-split) multiplicative reduction. When $E$ has additive reduction at $v$, $P_v(X)=1$. Let $\widetilde{P}_v(X)=\widetilde{P}_v^E(X)\in \Z/p\Z[X]$ be the mod-$p$ reduction of $P_v(X)$. Let $d_v(E)$ be the multiplicity of $\ell^{-1}$ as a root of $\widetilde{P}_v(X)$.  Note that if $E$ is defined over $\Q$, then, $P_v(X)=P_{v^*}(X)$ and $d_v(E)=d_{v^*}(E)$. Denote by $E_0(K_v)$ the subgroup of points in $E(K_v)$ of good reduction. The \emph{Tamagawa number} $c_v(E)$ is the index $[E(K_v):E_0(K_v)]$. Let $|\cdot|_p$ be the nonarchimedian absolute value normalized so that $|p|_p=p^{-1}$. The $p$-primary part of $c_v(E)$ is defined as $c_v^{(p)}(E):=|c_v(E)|_p^{-1}$. Set $\Omega_K$ to be the set of primes $v\nmid p$ of $K$ which are finitely decomposed in $\Kant$. Set $\Sh(E/K)[p^\infty]$ to denote the $p$-primary part of the Tate-Shafarevich group of $E$.

\begin{theorem}\label{main theorem}
Let $K$ be an imaginary quadratic field and let $p$ be an odd prime number which splits into primes $\p$ and $\p^*$ in $K$. Let $\Kant$ be the anticyclotomic $\Z_p$-extension of $K$, and for $n\in \Z_{\geq 0}$ denote by $K_n$, its $n$-th layer. With respect to notation above, assume that there exist elliptic curves $E_1$ and $E_2$ defined over $K$ such that 
\begin{enumerate}
\item both $E_1$ and $E_2$ have good ordinary reduction at $\p$ and $\p^*$,
\item $E_1[p]$ and $E_2[p]$ are isomorphic as modules over $\op{Gal}(\bar{K}/K)$,
\item $\op{rank} E_1(K)=0$ and $\op{rank}E_2(K)>0$, 
\item $E_1(K)[p]=E_2(K)[p]=0$, 
\item\label{c5 of thm main} each prime $v\in \Sigma(E_1)\cup \Sigma(E_2)$ is finitely decomposed in the anticyclotomic extension $\Kant/K$. Equivalently, let $v\in \Sigma(E_1)\cup \Sigma(E_2)$ and $\ell$ be the prime number for which $v|\ell$. Then, $\ell$ splits in $K$ into a product of two primes $v$ and $v^*$.
\item\label{c6 of thm main} $\Sh(E_1/K)[p^\infty]=0$, 
\item $E_1(\F_{\p})[p]=0$.
\item\label{c8 of thm main} Recall that $\Omega_K$ is the set of primes $v\nmid p$ of $K$ which are finitely decomposed in $\Kant$. Assume that $c_v^{(p)}(E_1)=1$ for all $v\in \Omega_K$ (at which $E_1$ has bad reduction). 
 
\end{enumerate}
Furthermore, assume that the following relationship holds
\begin{equation}\label{main equation for rank}\op{rank} E_2(K)=\sum_{v\in \Sigma(E_1)\cup \Sigma(E_2)} s_v\left(d_v(E_1)-d_v(E_2)\right).\end{equation}

Then, for all $n\in \Z_{\geq 0}$, $\cO_{K_n}$ satisfies Conjecture \ref{conj denef lip}, and thus Hilbert's tenth problem has a negative answer for $\cO_{K_n}$.
\end{theorem}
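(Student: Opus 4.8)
The plan is to deduce the theorem from Shlapentokh's criterion (Theorem~\ref{shlap 1}) applied to $E_2$. Concretely, I will show that the hypotheses, and in particular the identity \eqref{main equation for rank}, force
\[
\op{rank} E_2(K_n) \;=\; \op{rank} E_2(K) \;>\; 0 \qquad \text{for all } n \geq 0 .
\]
Granting this, Theorem~\ref{shlap 1} shows that each $K_n/K$ is integrally diophantine; since $K/\Q$ is integrally diophantine by the classical results recalled in the introduction (e.g.\ because $K$ is abelian over $\Q$), and integrally diophantine extensions are transitive, $K_n/\Q$ is integrally diophantine. Hence $\Z$ is an integrally diophantine subset of $\cO_{K_n}$ in the sense of Definition~\ref{diophantine subset}, so $\cO_{K_n}$ satisfies Conjecture~\ref{conj denef lip} and inherits the unsolvability of Hilbert's tenth problem from $\Z$ \cite{matiyasevich1970diophantineness}. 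Thus everything reduces to the displayed rank equality, which I would establish through the anticyclotomic Iwasawa theory of the Selmer groups of $E_1$ and $E_2$.

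Set $\Gamma = \op{Gal}(\Kant/K)$ and $\Lambda = \Z_p\lb\Gamma\rb \cong \Z_p\lb T\rb$, and for $i=1,2$ let $X_i = \Sel_{p^\infty}(E_i/\Kant)^{\vee}$ be the Pontryagin dual of the $p$-primary Selmer group over $\Kant$, a finitely generated $\Lambda$-module. The first step is to show $X_1 = 0$. Hypotheses (3), (4) and (6) give $\Sel_{p^\infty}(E_1/K) = 0$. Since $\Gamma$ is pro-$p$ while the automorphism group of a line over $\F_p$ has order prime to $p$, any $G_{\Kant}$-stable line in $E_1[p]$ would be $G_K$-stable and pointwise fixed by $G_K$, and likewise $E_1[p] \subseteq E_1(\Kant)$ would produce a $G_K$-fixed line; both contradict (4), so $E_1(\Kant)[p^\infty] = 0$. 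Consequently the kernel of the restriction map $\Sel_{p^\infty}(E_1/K) \to \Sel_{p^\infty}(E_1/\Kant)^{\Gamma}$, a subquotient of $H^1(\Gamma, E_1(\Kant)[p^\infty])$, vanishes; and hypotheses (1) (good ordinary reduction at $\p, \p^*$), (7) ($E_1(\F_\p)[p] = 0$) and (8) ($c_v^{(p)}(E_1) = 1$ at the finitely decomposed bad primes) --- together with the fact that the bad primes of $E_1$ that are not finitely decomposed split completely in $\Kant$ and hence contribute no local error --- force the cokernel to vanish as well, so this restriction map is an isomorphism. Therefore $X_1/TX_1 \cong \Sel_{p^\infty}(E_1/K)^{\vee} = 0$, and since a surjective $\Lambda$-endomorphism of a finitely generated $\Lambda$-module is an isomorphism, $X_1 = 0$; in particular $\mu_p(E_1/\Kant) = \lambda_p(E_1/\Kant) = 0$.

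Next I would transfer this to $E_2$ through the congruence (2). The argument just given (using (4)) also yields $E_2(\Kant)[p] = 0$, so for $i = 1,2$ the residual Selmer group $\Sel(E_i[p]/\Kant)$ is identified with $\Sel_{p^\infty}(E_i/\Kant)[p]$, and the local conditions defining $\Sel(E_1[p]/\Kant)$ and $\Sel(E_2[p]/\Kant)$ agree at every prime outside $\Sigma(E_1) \cup \Sigma(E_2)$: this is clear at good primes away from $p$, follows at $\p, \p^*$ from ordinariness and (7), and is exactly how the sets $\Sigma(E_i)$ are designed at the bad primes. By (5) every prime of $\Sigma(E_1) \cup \Sigma(E_2)$ is finitely decomposed, so this finite set alters a finite residual Selmer group by finite pieces; since $\Sel(E_1[p]/\Kant) = X_1[p]^{\vee} = 0$, it follows that $\Sel(E_2[p]/\Kant)$ is finite, i.e.\ $X_2$ is $\Lambda$-torsion with $\mu_p(E_2/\Kant) = 0$, hence finitely generated over $\Z_p$ of rank $\lambda_p(E_2/\Kant)$. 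The crucial input is then the anticyclotomic analogue of the Greenberg--Vatsal comparison of Iwasawa invariants of congruent elliptic curves: because $E_1[p] \cong E_2[p]$ controls all but finitely many local terms and both $\mu$-invariants vanish, one obtains
\[
\lambda_p(E_2/\Kant) - \lambda_p(E_1/\Kant) \;=\; \sum_{v \in \Sigma(E_1)\cup\Sigma(E_2)} s_v\bigl(d_v(E_1) - d_v(E_2)\bigr),
\]
where a split prime $v$ enters with multiplicity $s_v$ and the local discrepancy is measured by $d_v$ (one checks $\widetilde{P}_v^{E}(\ell^{-1}) \equiv q_v^{-1}\,\#E(\F_v) \bmod p$, which vanishes precisely in case (1) of the definition of $\Sigma(E)$, the multiplicative and additive cases being governed by (2) and (3)). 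Since $\lambda_p(E_1/\Kant) = 0$, the hypothesis \eqref{main equation for rank} gives $\lambda_p(E_2/\Kant) = \op{rank}E_2(K) =: r$.

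Finally I would descend. Writing $\omega_n = (1+T)^{p^n} - 1$, Mazur's control theorem for $E_2$ (good ordinary at $\p, \p^*$) has finite kernel and cokernel, so $\Sel_{p^\infty}(E_2/K_n)^{\vee}$ and $X_2/\omega_n X_2$ have the same $\Z_p$-rank; being a quotient of $X_2$, this rank is at most $\op{rank}_{\Z_p} X_2 = \lambda_p(E_2/\Kant) = r$. Combining this with $E_2(K) \subseteq E_2(K_n)$ and the injection $E_2(K_n) \otimes \Q_p/\Z_p \hookrightarrow \Sel_{p^\infty}(E_2/K_n)$ gives
\[
r = \op{rank}E_2(K) \;\leq\; \op{rank}E_2(K_n) \;\leq\; \op{corank}_{\Z_p} \Sel_{p^\infty}(E_2/K_n) \;\leq\; r ,
\]
whence $\op{rank}E_2(K_n) = r > 0$ for every $n$, as needed for Shlapentokh's criterion. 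The main obstacle is the $\lambda$-comparison of the third step: pushing the Greenberg--Vatsal method through in the anticyclotomic setting requires an exact --- not merely asymptotic --- analysis of the local conditions, particularly at the ramified primes above $p$ (via the ordinary filtration and hypothesis (7)) and at the bad primes, and a proof that $\Sigma(E_1) \cup \Sigma(E_2)$ is exactly the locus where the two residual Selmer conditions differ. The secondary difficulty is the bookkeeping in the control theorems --- tracking which of (1), (5), (7), (8) is responsible for killing which local error term, and in particular why $E_1$ needs an exact control theorem while finite kernel and cokernel suffice for $E_2$.
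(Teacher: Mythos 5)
Your proposal is correct in outline and follows the same architecture as the paper: first make the Iwasawa invariants of $E_1$ over $\Kant$ vanish, then transfer to $E_2$ through the congruence to obtain $\lambda_p(E_2/\Kant)=\op{rank}E_2(K)$, and finally combine the finiteness of the kernel of $\op{Sel}_{p^\infty}(E_2/K_n)\to \op{Sel}_{p^\infty}(E_2/\Kant)$ with Theorem \ref{shlap 1} and transitivity of integrally diophantine extensions (this is exactly the content of Theorem \ref{first theorem section 3} and Proposition \ref{prop 3.2}; note the paper only needs finite kernel of restriction, not a full control theorem for $E_2$). The one place where you genuinely diverge is the treatment of $E_1$: the paper deduces $\mu_p(E_1/\Kant)=\lambda_p(E_1/\Kant)=0$ from Van Order's anticyclotomic Euler characteristic formula together with the equivalence $\chi=1\Leftrightarrow\mu=\lambda=0$ (Theorem \ref{thm 4.5}, Proposition \ref{prop 4.4}, Corollary \ref{cor 4.6}), whereas you prove the stronger statement $X_1=0$ via an exact control theorem at level $K$ plus Nakayama. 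The two routes rest on the same local inputs (hypotheses (6)--(8), and the fact that infinitely decomposed primes split completely and contribute nothing), but yours additionally needs surjectivity of the restriction map, hence surjectivity of the global-to-local map defining $\op{Sel}_{p^\infty}(E_1/\Kant)$; this is available because $\op{Sel}_{p^\infty}(E_1/K)=0$ makes Greenberg's cotorsion result (Proposition \ref{prop 2.1}) applicable, but it should be invoked explicitly, while the paper's Euler characteristic route only requires finiteness of the $\Gamma$-cohomology. (Both your control argument and the paper's Corollary \ref{cor 4.6} quietly use the vanishing of $E_1(\F_{\p^*})[p]$ in addition to hypothesis (7) at $\p$; this is automatic when $E_1$ is defined over $\Q$, as in the paper's example.) Finally, the heaviest ingredient --- the anticyclotomic Greenberg--Vatsal comparison $\lambda_p(E_2/\Kant)-\lambda_p(E_1/\Kant)=\sum_{v\in\Sigma}s_v\left(d_v(E_1)-d_v(E_2)\right)$ --- is a black box in your sketch. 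You rightly flag it as the main obstacle, but the published version (Kundu--Ray) covers only $p\geq 5$; a substantive part of the paper (Lemma \ref{iota map injective}, Proposition \ref{prop 3.3}, Theorem \ref{thm 3.5}) is the extension to $p=3$, where $\Sigma$ must be enlarged to include additive reduction of Kodaira type $\mathrm{IV}$ or $\mathrm{IV}^*$ precisely because those are the types whose local contribution can be divisible by $3$; since the theorem is stated for all odd $p$, a complete proof cannot simply cite the $p\geq 5$ literature at this step.
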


\par Note that when the elliptic curves $E_1$ and $E_2$ are defined over $\Q$, the condition \eqref{c5 of thm main} above implies that the sum $\sum_{v\in \Sigma(E_1)\cup \Sigma(E_2)} s_v\left(d_v(E_1)-d_v(E_2)\right)$ is even, and therefore, the rank of $E_2(K)$ is even. The condition \eqref{c6 of thm main} in the theorem above implies in particular that $\Sh(E_1/K)[p^\infty]$ is finite. However, we do not need to assume that $\Sh(E_2/K)[p^\infty]$ is finite. We illustrate the above theorem via an explicit example for which all the conditions are satisfied. In what follows, let $K=\Q(\sqrt{-5})$, $p=3$ and consider the pair of rational elliptic curves $E_1=$ \href{https://www.lmfdb.org/EllipticCurve/Q/56/b/2}{56b1} and $E_2=$ \href{https://www.lmfdb.org/EllipticCurve/Q/392/c/1}{392c1} with respect to Cremona's labelling for elliptic curves. Note that both elliptic curves are defined over $\Q$. The residual representations 
\[\bar{\rho}_i: \op{Gal}(\bar{\Q}/\Q)\rightarrow \op{GL}_2(\Z/3\Z)\] are surjective and $\bar{\rho}_1$ is isomorphic to $\bar{\rho}_2$. The ranks of the Mordell Weil groups $E_1(\Q)$ and $E_2(\Q)$ are $0$ and $1$ respectively. One finds that $\op{rank} E_1(K)$ remains $0$, while for $E_2$, there is a rank jump and $\op{rank}E_2(K)=2$. Computations were done on the {\tt SageMathCloud} and the {\tt Magma}  computational algebra system to show that the conditions of Theorem \ref{main theorem} for $(K,p, E_1, E_2)$ are satisfied. We obtain the following result, illustrating Theorem \ref{main theorem}.
\begin{theorem}\label{theorem main aux}
Let $K=\Q(\sqrt{-5})$ and for $n\geq 0$, let $K_n$ be the $n$-th layer in the anticyclotomic $\Z_3$-extension of $K$. The curve $E_1=56b1$ has rank $0$ over $K$. Then, for all $n\in \Z_{\geq 0}$, $\cO_{K_n}$ satisfies Conjecture \ref{conj denef lip}. Thus, this shows that Hilbert's tenth problem has a negative answer for $\cO_{K_n}$.
 \end{theorem}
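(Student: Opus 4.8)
The plan is to check that the quadruple $(K,p,E_1,E_2)=\bigl(\Q(\sqrt{-5}),3,56b1,392c1\bigr)$ satisfies every hypothesis of Theorem~\ref{main theorem} --- conditions (1)--(8) together with the rank identity \eqref{main equation for rank} --- and then to apply that theorem, concluding that $\cO_{K_n}$ satisfies Conjecture~\ref{conj denef lip} for all $n$, hence that Hilbert's tenth problem is unsolvable for each $\cO_{K_n}$. (The base case, that the imaginary quadratic field $K$ is integrally diophantine over $\Q$, is classical since $K$ has exactly one complex place, and it is already folded into Theorem~\ref{main theorem}.) The verification is a finite computation with the reduction data of $E_1$ and $E_2$; throughout one uses that $3$ splits in $K$ because $-5\equiv1\bmod3$ is a square, that $E_1$ has conductor $2^3\cdot7$ and $E_2$ has conductor $2^3\cdot7^2$ --- so $\mathfrak{T}$ is the set of primes of $K$ above $2$ and $7$, with both curves additive at $2$, $E_1$ multiplicative at $7$, and $E_2$ additive at $7$ (tame, conductor exponent $2$) --- and that $K'=K(\mu_3)=\Q(\sqrt{-5},\sqrt{-3})$. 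Hypotheses (1) and (7) are read off the tables of Frobenius traces: (1) is $3\nmid a_3(E_i)$ (good ordinary reduction at $\p,\p^*$), and (7) is $a_3(E_1)\not\equiv1\bmod3$, equivalently $3\nmid\#E_1(\F_3)$. Hypothesis (2), $E_1[3]\cong E_2[3]$ as $\op{Gal}(\bar{K}/K)$-modules, holds already over $\Q$: both $\bar{\rho}_{E_1,3}$ and $\bar{\rho}_{E_2,3}$ are surjective, and one checks $a_\ell(E_1)\equiv a_\ell(E_2)\bmod3$ for $\ell$ below an effective bound (or reads off that $56b$ and $392c$ share a mod-$3$ congruence class). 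Surjectivity of $\bar{\rho}_{E_i,3}$ together with $\mu_3\not\subseteq K$ (as $K\ne\Q(\sqrt{-3})$) forces the image of $\op{Gal}(\bar{K}/K)$ to be all of $\op{GL}_2(\F_3)$, which has no nonzero fixed vector, giving hypothesis (4): $E_i(K)[3]=0$.

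For the arithmetic hypotheses over $K$, a descent in {\tt Magma} gives $\op{rank}E_1(K)=0$ and $\op{rank}E_2(K)=2$ (the predicted jump from $\op{rank}E_2(\Q)=1$), which is hypothesis (3) and pins down the left side of \eqref{main equation for rank}; since $E_1,E_2$ are defined over $\Q$, the right side of \eqref{main equation for rank} is automatically even, consistent with the value $2$. Hypothesis (6), $\Sh(E_1/K)[3^\infty]=0$, is the delicate input: because $\op{rank}E_1(K)=0$ it suffices to prove $\Sel_3(E_1/K)=0$, and this one gets from a full $3$-descent over $K$, equivalently from the $3$-part of the Birch--Swinnerton-Dyer formula for $E_1/K$ with analytic order of $\Sh$ equal to $1$. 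Hypothesis (8) asks $c_v^{(3)}(E_1)=1$ at the bad primes $v\nmid3$ of $E_1$ finitely decomposed in $\Kant$; since $E_1$ is bad only over $2$ and $7$ and the primes over $2$ ramify in $K/\Q$ (hence are not finitely decomposed), this reduces to checking the $3$-parts of the Tamagawa numbers of $E_1$ at the primes of $K$ over $7$.

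Finally one analyses $\Sigma(E_1)\cup\Sigma(E_2)$ and \eqref{main equation for rank}. Because $-5$ and $-3$ are both squares modulo $7$, the prime $7$ splits completely in $K'$, so reduction types at $w\mid7$ agree with those over $\Q_7$; because $2$ ramifies in $K/\Q$, condition (5) forces the requirement that \emph{no} prime over $2$ lie in $\Sigma(E_1)\cup\Sigma(E_2)$, i.e. each $E_i$ has Kodaira type at every $w\mid2$ of $K'$ other than split multiplicative, $\mathrm{IV}$, and $\mathrm{IV}^*$ --- a Tate-algorithm computation over $K'_w$ (residue field $\F_4$, ramification index $2$, also using the $j$-invariant to rule out potential multiplicativity). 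Granting this, $\Sigma(E_1)\cup\Sigma(E_2)$ consists of exactly the two conjugate primes $v,v^*$ of $K$ over $7$, both split in $K/\Q$, so (5) holds, and \eqref{main equation for rank} becomes, using $s_v=s_{v^*}$ and $d_v(E_i)=d_{v^*}(E_i)$ over $\Q$,
\begin{equation*}
\op{rank}E_2(K)=2\,s_v\bigl(d_v(E_1)-d_v(E_2)\bigr).
\end{equation*}
Here $d_v(E_2)=0$ since $E_2$ has additive reduction at $7$, so $\widetilde{P}_v^{E_2}=1$; and $d_v(E_1)$ is the multiplicity of $7^{-1}\equiv1\bmod3$ as a root of $\widetilde{P}_v^{E_1}=1-a_7(E_1)X$, so $d_v(E_1)=1$ exactly when $a_7(E_1)=1$, i.e. $E_1$ has split multiplicative reduction at $7$. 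Brink's algorithm \cite{brink2007prime} computes $s_v$, and the example is built so that $s_v=1$ and $a_7(E_1)=1$, making the right side equal $2=\op{rank}E_2(K)$.

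With all hypotheses of Theorem~\ref{main theorem} verified, that theorem yields Theorem~\ref{theorem main aux}. The main obstacles are the descent computations --- establishing $\Sel_3(E_1/K)=0$ (which at once gives $\op{rank}E_1(K)=0$, $\Sh(E_1/K)[3^\infty]=0$, and the core of hypothesis (4)) and $\op{rank}E_2(K)=2$ --- and the local analysis at the primes above $2$ securing hypothesis (5); everything else reduces to bookkeeping with tabulated invariants and one use of Brink's formula.
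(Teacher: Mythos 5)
Your proposal is correct and follows essentially the same route as the paper: one verifies the hypotheses of Theorem \ref{main theorem} (equivalently Theorem \ref{section 3 main thm}) for $(\Q(\sqrt{-5}),3,56b1,392c1)$ by finite computation---the mod-$3$ congruence via a Sturm-bound check, $\op{rank}E_1(K)=0$ and $\op{rank}E_2(K)=2$, Kodaira type $\op{I}_1^*$ for both curves at the prime of $K'$ above $2$ so that $\Sigma=\{v_7,v_7^*\}$, $s_{v_7}=s_{v_7^*}=1$ by Brink's algorithm, and $d_{v_7}(E_1)=1$, $d_{v_7}(E_2)=0$---and then applies the theorem, exactly as you outline. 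The only noteworthy implementation difference is hypothesis (6): you propose a direct $3$-descent over $K$, whereas the paper computes the $3$-Selmer groups over $\Q$ of $E_1$ and of its quadratic twist by $-5$ and invokes \cite[Lemma 3.1]{OnoPapa} to conclude $\op{Sel}_3(E_1/K)=0$; either is acceptable, but your parenthetical alternative via the $3$-part of the Birch--Swinnerton-Dyer formula would be conditional and should be dropped.
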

 
 \subsection{Method of proof}
\par Let us briefly describe the method of proof of the Theorem \ref{main theorem}. Let $E_{/K}$ be an elliptic curve and $p$ be a prime such that $E$ has good ordinary reduction at the primes of $K$ above $p$. Let $\op{Sel}_{p^\infty}(E/\Kant)$ be the $p$-primary Selmer group of $E$ over $\Kant$. We show that if there exists an elliptic curve $E$ for which the following conditions hold
\begin{enumerate}
\item $E$ has good ordinary reduction at the primes $\p$ and $\p^*$,
\item $\op{rank}E(K)>0$,
\item the Selmer group $\op{Sel}_{p^\infty}(E/\Kant)$ is a cofinitely generated $\Z_p$-module, and
\item the $\lambda$-invariant of $\op{Sel}_{p^\infty}(E/\Kant)$ is equal to $\op{rank} E(K)$, 
\end{enumerate}
then, $K_n/\Q$ is integrally diophantine for all $n$. The proof relies on basic properties of the $\lambda$-invariant and Theorem \ref{shlap 1}. We refer to Theorem \ref{first theorem section 3} and its proof for further details. In order to construct an elliptic curve $E_{/K}$ such that the above conditions are satisfied, we exploit the Iwasawa theory of congruences between elliptic curves, initiated by Greenberg-Vatsal \cite{greenberg2000iwasawa} and further refined by Kundu-Ray \cite[section 6]{kunduraykida}. This refinement does not only generalize the results of Greenberg and Vatsal to non-cyclotomic $\Z_p$-extensions, but it gives a more accurate description of the primes that actually contribute to the change in $\lambda$-invariant. This optimal set of primes is the set $\Sigma(E_1)\cup \Sigma(E_2)$, and for the example considered in section \ref{s 5}, this set is strictly smaller than the set of all primes of bad reduction for one or both elliptic curves. Condition \eqref{c5 of thm main} stipulates that each of primes $v\in \Sigma(E_1)\cup \Sigma(E_2)$ must be finitely decomposed in $\Kant$, which is the case for the example considered. It is thus ideal for the set $\Sigma(E_1)\cup \Sigma(E_2)$ for which this assumption is made, to be as small as possible.

\par We consider two elliptic curves $E_1$ and $E_2$ defined over $K$ that are $p$-congruent satisfying the conditions of Theorem \ref{main theorem}. Since $\op{rank} E_1(K)=0$, and the $p$-primary part of $\Sh(E_1/K)$ is finite (in fact, it is assumed to be zero), it follows that the $p$-primary Selmer group of $E_1$ over $K$ is finite. By a well known result of Greenberg that $\op{Sel}_{p^\infty}(E_1/\Kant)$ is a cotorsion module over the Iwasawa algebra (cf. Proposition \ref{prop 2.1}). Furthermore, the assumptions \eqref{c6 of thm main}-\eqref{c8 of thm main} imply that the $\mu$ and $\lambda$-invariants of $\op{Sel}_{p^\infty}(E_1/\Kant)$ are zero (cf. Corollary \ref{cor 4.6}). Such a criterion relies on a formula for the Euler characteristic over the anticyclotomic extension, due to Van Order \cite{van15}. At this step, it is crucial for $E_1$ to have rank $0$ over $K$. We set $E=E_2$, which is an elliptic curve with positive rank over $K$, and show that the above conditions are satisfied. The congruence with the rank $0$ elliptic curve $E_1$, along with the assumptions of Theorem \ref{main theorem} imply that $\op{Sel}_{p^\infty}(E/\Kant)$ is a cofinitely generated $\Z_p$-module satisfying the three conditions above. At this step we make use of Theorem \ref{thm 3.5}, which essentially extends the results of \cite[section 6]{kunduraykida} to $p=3$.

\subsection{Outlook/ future directions} Our results show that Iwasawa theory can be used as a novel tool to prove new interesting cases of Conjecture \ref{conj denef lip}. It is natural to extend such investigations to more general contexts that involve $p$-adic Lie extensions of dimension $d>1$. Let us illustrate this by considering a special case: the $\Z_p^2$-extension $\widetilde{K}_\infty$ of an imaginary quadratic field $K$. For $n\geq 0$, let $\widetilde{K}_n$ denote the $n$-th layer, i.e., the extension of $K$, contained in $\widetilde{K}_\infty$ for which $\op{Gal}(\widetilde{K}_n/K)\simeq (\Z/p^n \Z)^2$. Let $E$ be an elliptic curve over $\Q$ with good ordinary reduction at $p$. When the root number of the Hasse-Weil L-function of $E_{/K}$ is $+1$, then, a result of Cornut and Vatsal \cite{cornut2007nontriviality} shows that the Mordell-Weil rank of $E(\widetilde{K}_n)$ is bounded as $n\rightarrow \infty$. Suppose that $\op{rank}E(K)>0$ and the root number of $E_{/K}$ is equal to $+1$. Following Mazur and Rubin, we say that the Mordell-Weil rank is \emph{diophantine stable} in $\widetilde{K}_\infty$ if \[\op{rank}E(\widetilde{K}_n)=\op{rank}E(K)\] for all $n$. Then, from Theorem \ref{shlap 1}, it is evident that this property implies that Conjecture \ref{conj denef lip} is satisfied for all number fields $\widetilde{K}_n$. Generalized upper bounds for the asymptotic growth of the rank of the Mordell--Weil group of an abelian variety in a tower of number fields contained in a $p$-adic Lie extension, were proven by Harris \cite{harris2000correction}. The precise order of growth in Mordell-Weil ranks is however not well understood \cite[Question following Proposition 4.7]{hung2021growth}. It is our expectation that the search for diophantine stability in the context of Iwasawa theory would lead to interesting theoretical and computational developments.
%\par We note here that a potential alternative approach to our method would involve proving a generalized Euler characteristic formula in the positive rank setting for the anticyclotomic $\Z_p$-extension (in the spirit of \cite{zerbes2009generalised,schneider1985p}). Such a formula has not been proven (to the best of our knowledge) and would likely involve the $p$-adic regulator over the anticyclotomic $\Z_p$-extension. There are no available computational packages that have been implemented to compute such an anticyclotomic regulator. Instead, our approach relies on congruences between elliptic curves of positive rank and elliptic curves of rank $0$. The example computed illustrates our results. One expects to be able to compute more examples in this vein and thus prove that Hilbert's tenth problem has a negative solution for more anticyclotomic towers of number fields. It does however take significantly longer to perform these calculations for $p\geq 5$.

\subsection{Organization} Including the introduction, the paper consists of five sections. In section \ref{s 2}, we set up notation and discuss preliminary notions from anticyclotomic Iwasawa theory. We also discuss the notion of an integrally diophantine extension in this section. In section \ref{s 3}, we discuss congruences between elliptic curves and their effect on Selmer groups. More specifically, in this section, we recall the results of \cite[section 6]{kunduraykida}. These results apply only to primes $p\geq 5$, and our main task in section \ref{s 3} is to extend those results to the case when $p=3$. In section \ref{s 4}, we study Iwasawa theory in the context of Hilbert's tenth problem, and prove Theorem \ref{main theorem}. In section \ref{s 5}, we illustrate Theorem \ref{main theorem} through an explicit example, and prove Theorem \ref{theorem main aux} as a consequence.

\subsection{Acknowledgment} When the project was started, the author was a Simons postdoctoral fellow at the Centre de recherches mathematiques in Montreal, Canada. At this time, the author's research is supported by the CRM Simons postdoctoral fellowship. He thanks Antonio Lei for some helpful discussions. We thank the referee for the excellent report.

\section{Preliminaries}\label{s 2}
\subsection{Iwasawa theory of elliptic curves}
\par In this section, we review some necessary background. For a more comprehensive introduction to the Iwasawa theory of elliptic curves, the reader is referred to \cite[Chapter 1]{coates2000galois}. Throughout, $p$ is an odd prime number and $E$ shall denote an elliptic curve over a number field $K$. We assume that $E$ has good ordinary reduction at the primes $v|p$. We fix an algebraic closure $\bar{K}$ of $K$. Let $\Z_p$ denote the $p$-adic integers. An infinite Galois extension $K_\infty/K$ is said to be a $\Z_p$-extension if there is a topological isomorphism of groups $\op{Gal}(K_\infty/K)\xrightarrow{\sim} \Z_p$. Let $K_\infty/K$ be a $\Z_p$-extension, and set $\Gamma:=\op{Gal}(K_\infty/K)$. The \emph{Iwasawa algebra} is the completed group ring defined as follows \[\Lambda:=\Z_p\llbracket \Gamma \rrbracket:=\lim_{n\rightarrow \infty} \Z_p[\Gamma/\Gamma^{p^n}].\]  Fixing a topological generator $\gamma$ of $\Gamma$ yields the usual isomorphism $\Lambda \cong \Zp\llbracket T \rrbracket$ with the ring of integral $p$-adic power series. This isomorphism identifies the group-like element $\gamma$ with $(T+1)$.
 For $n\in \Z_{\geq 0}$, set $K_n$ to denote the $n$-th layer of the associated tower. This is the extension $K_n/K$ such that $K_n\subset K_\infty$ and $\op{Gal}(K_n/K)\simeq \Z/p^n\Z$. Thus, there is an infinite tower of number fields
\[K_0\subset K_1\subset \dots \subset K_n\subset K_{n+1}\subset \dots,\] where $K=K_0$. We shall refer to such a tower as a $\Z_p$-tower over $K$. 

\par Let $F/K$ be a number field extension. Given a finite prime $v$ of $K$, set $v(F)$ to denote the set of primes of $F$ that lie above $v$. We set 
\[J_v(E/F):=\bigoplus_{w\in v(F)} H^1(F_w, E)[p^\infty],\] and note that there is a natural map 
\[\iota_v: H^1(\bar{F}/F, E[p^\infty])\longrightarrow J_v(E/F)\] which factors through the restriction maps
\[H^1(\bar{F}/F, E[p^\infty])\longrightarrow \bigoplus_{w\in v(F)} H^1(F_w, E[p^\infty]).\]
The $p$-primary Selmer group of $E/F$ is defined as follows
\[\op{Sel}_{p^\infty}(E/F):=\op{ker}\left(H^1(\bar{F}/F, E[p^\infty])\xrightarrow{\iota} \prod_{v} J_v(E/F)\right),\] where the map $\iota$ is induced by the maps $\iota_v$, and the product ranges over all primes $v$ of $K$.
\par We set $\op{Sel}_{p^\infty}(E/K_\infty)$ to denote the direct limit $\lim_{n\rightarrow \infty} \op{Sel}_{p^\infty}(E/K_n)$, with respect to restriction maps. Let $S$ be a set of primes of $K$ containing the primes above $p$ and the primes at which $E$ has bad reduction. Denote by $K_S$ the maximal algebraic extension of $K$ in which all primes $v\notin S$ are unramified. For $v\in S$, set $J_v(E/K_\infty)$ to be the direct limit $\lim_{n\rightarrow \infty} J_v(E/K_n)$. The Selmer group $\op{Sel}_{p^\infty}(E/K_\infty)$ is described as follows
\begin{equation}\label{selmer group definition alt}\op{Sel}_{p^\infty} (E/K_\infty):=\op{ker}\left(H^1(K_S/K_\infty, E[p^\infty])\longrightarrow \bigoplus_{v\in S} J_v(E/K_\infty)\right).\end{equation}Given a module $M$ over $\Z_p$, we set $M^{\vee}=\op{Hom}_{\Z_p}\left(M, \Q_p/\Z_p\right)$ to denote its \emph{Pontryagin dual}. A module $M$ over $\Lambda$ is said to be cofinitely generated (resp. cotorsion) over $\Lambda$ if $M^\vee$ is finitely generated (resp. torsion) over $\Lambda$. We discuss conditions for $\op{Sel}_{p^\infty}(E/K_\infty)$ to be cotorsion module over $\Lambda$.

\begin{proposition}\label{prop 2.1}
Let $E$ be an elliptic curve over a number field $K$ and let $p$ be an odd prime number. Let $K_\infty$ be a $\Z_p$-extension of $K$. Assume that the following conditions are satisfied
\begin{enumerate}
\item $E$ has good ordinary reduction at the primes $v|p$,
\item the Selmer group $\op{Sel}_{p^\infty}(E/K)$ is finite.
\end{enumerate}
Then the Selmer group $\op{Sel}_{p^\infty}(E/K_\infty)$ is a cotorsion module over $\Lambda$. 
\end{proposition}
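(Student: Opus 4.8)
The plan is to reprove Greenberg's control theorem in the generality needed. Put $\Gamma=\op{Gal}(K_\infty/K)$, fix a topological generator $\gamma$ with $T=\gamma-1\in\Lambda$, and set $X:=\op{Sel}_{p^\infty}(E/K_\infty)^\vee$. Because $\op{Sel}_{p^\infty}(E/K_\infty)\subseteq H^1(K_S/K_\infty,E[p^\infty])$ and $\op{Gal}(K_S/K)$ is topologically finitely generated, $X$ is a finitely generated $\Lambda$-module, so what must be shown is that it is $\Lambda$-torsion. First I would record the algebraic reduction: \emph{if $X$ is finitely generated over $\Lambda$ and $X/TX$ is finite, then $X$ is $\Lambda$-torsion.} Indeed, a pseudo-isomorphism $X\sim\Lambda^r\oplus\bigoplus_i\Lambda/(f_i)$ together with a snake-lemma comparison shows that $X/TX$ differs by finite groups from $(\Lambda^r\oplus\bigoplus_i\Lambda/(f_i))/T(\cdots)$, and the latter is infinite whenever $r\geq 1$. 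Dualizing, it therefore suffices to prove that $\op{Sel}_{p^\infty}(E/K_\infty)^\Gamma$ is finite.

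For this step I would set up the fundamental diagram comparing the defining exact sequences of $\op{Sel}_{p^\infty}(E/K)$ and of $\op{Sel}_{p^\infty}(E/K_\infty)^\Gamma$; it has exact rows and vertical restriction maps $s$ (on the Selmer groups), $h$ (on $H^1(K_S/-,E[p^\infty])$), and $g=\bigoplus_{v\in S}g_v$ (on the local terms $J_v$). By inflation--restriction together with $\op{cd}(\Gamma)=1$, the map $h$ is surjective with $\ker h=H^1(\Gamma,E(K_\infty)[p^\infty])$, which is finite. The snake lemma then shows that $\op{coker}s$ is a subquotient of $\ker g=\bigoplus_{v\in S}\ker g_v$. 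Granting finiteness of each $\ker g_v$, and combining with the hypothesis that $\op{Sel}_{p^\infty}(E/K)$ is finite, we obtain that $\op{Sel}_{p^\infty}(E/K_\infty)^\Gamma$ is finite, which by the previous paragraph completes the proof.

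It remains to check that $\ker g_v$ is finite for every $v\in S$, and this is the heart of the matter. For $v\nmid p$ it is elementary: if only finitely many primes of $K_\infty$ lie over $v$, then $K_{\infty,w}/K_v$ is the unramified $\Z_p$-extension and $\ker g_v$ is a subquotient of $H^1(\op{Gal}(K_{\infty,w}/K_v),E(K_{\infty,w})[p^\infty])$, which is finite because Frobenius acts on the cofinitely generated $\Z_p$-module $E(K_{\infty,w})[p^\infty]$ with $\det(\op{Frob}-1)\neq 0$; while if infinitely many primes lie over $v$, then $g_v$ is essentially the identity map on $H^1(K_v,E)[p^\infty]$, a finite group since $E(K_v)\otimes\Q_p/\Z_p=0$ in residue characteristic different from $p$. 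The case $v\mid p$ is where the good ordinary reduction hypothesis is essential: the local condition defining $J_v$ is described, up to finite groups, via the $G_{K_v}$-submodule of $E[p^\infty]$ coming from the formal group (equivalently, via the unramified quotient), and one shows that $g_v$ is surjective with finite kernel by a local Euler-characteristic computation for that one-dimensional quotient over $K_{\infty,w}/K_v$.

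I expect this last computation to be the \textbf{main technical obstacle}, since it requires implementing the ordinary filtration carefully and verifying that base change to $K_\infty$ alters the relevant local cohomology only by finite groups; the remaining pieces (the algebraic reduction, the cohomology of $\Gamma$, and the $v\nmid p$ analysis) are routine by comparison. Since the assertion for $v\mid p$ is precisely the content of Greenberg's control theorem in this setting, one may alternatively simply invoke that theorem (see \cite[Chapter 1]{coates2000galois}) rather than reproving it.
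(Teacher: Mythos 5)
The paper disposes of this proposition in one line, by citing Greenberg's result (\cite[Corollary 4.9]{greenberg2001introduction}); what you have written out is, in effect, a sketch of the proof that lies behind that citation. Your overall architecture is the standard and correct one: reduce via the structure theorem to showing that $X/TX$ is finite, identify $(X/TX)^\vee$ with $\op{Sel}_{p^\infty}(E/K_\infty)^{\Gamma}$, and then run the control-theorem diagram, using $\op{cd}_p(\Gamma)=1$ for the surjectivity of $h$ and finiteness of $\ker h$, finiteness of $\op{Sel}_{p^\infty}(E/K)$ by hypothesis, and finiteness of each $\ker g_v$. Two caveats are worth recording. First, your justification that $X$ is finitely generated over $\Lambda$ via ``$\op{Gal}(K_S/K)$ is topologically finitely generated'' is not available: finite generation of $\op{Gal}(K_S/K)$ is not known in general; the standard route is the finiteness of $H^1(K_S/K, E[p])$ (Hermite--Minkowski) together with Nakayama's lemma, or equivalently the finiteness of $\op{Sel}_{p^\infty}(E/K_\infty)^\Gamma[p]$ that your own diagram provides. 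Second, since the proposition is stated for an \emph{arbitrary} $\Z_p$-extension (it is applied to the anticyclotomic one), the analysis of $\ker g_v$ for $v\mid p$ must cover all local behaviours: when $K_{\infty,w}/K_v$ is (deeply) ramified one argues via the ordinary filtration and the Coates--Greenberg theory, when it is the unramified $\Z_p$-extension a separate (easier) computation is needed, and when $v$ splits completely the kernel is trivially controlled; your sketch gestures at the first case only, and this case analysis is precisely the content of the result the paper cites. Since you explicitly offer the citation as a fallback for this step, your argument and the paper's proof ultimately rest on the same theorem; the difference is that the paper buys brevity by quoting it, while your reconstruction makes visible exactly where the good ordinary hypothesis and the finiteness of $\op{Sel}_{p^\infty}(E/K)$ enter.
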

\begin{proof}
The above result is \cite[Corollary 4.9]{greenberg2001introduction}.
\end{proof}

The Selmer group $\op{Sel}_{p^\infty}(E/K_\infty)$ is cofinitely generated over $\Lambda$, and we may define the associated Iwasawa invariants $\mu_p(E/K_\infty)$ and $\lambda_p(E/K_\infty)$ as follows. Setting $M:=\op{Sel}_{p^\infty}(E/K_\infty)$, the module $M^\vee$ is a finitely generated torsion module over $\Lambda$. A map $M_1\rightarrow M_2$ of $\Lambda$-modules is a pseudo-isomorphism if the kernel and cokernel are both finite. We depict this relation by writing $M_1\sim M_2$. A polynomial $f(T)\in \Lambda$ is said to be distinguished if it is monic and its non-leading coefficients are all divisible by $p$. By the structure theorem for finitely generated  torsion modules over $\Lambda$ \cite[Theorem 13.12]{washington1997introduction},
\begin{equation}\label{decomposition}M^\vee\sim \Lambda^r\oplus \left(\bigoplus_{i=1}^s \Lambda/(p^{n_i})\right)\oplus \left( \bigoplus_{j=1}^t \Lambda/(f_j(T)^{m_j})\right),\end{equation}
where $r,s,t,n_i, m_j\in \Z_{\geq 0}$ and $f_j(T)$ are all irreducible distinguished polynomials. The $\mu$-invariant is defined as follows
\[\mu_p(E/K_\infty):=\begin{cases} & 0\text{ if }s=0;\\
& \sum_{i=1}^s n_i \text{ if }s>0.\\
\end{cases}\]
The $\lambda$-invariant is defined as follows 
\[\lambda_p(E/K_\infty):=\begin{cases} & 0\text{ if }t=0;\\
& \sum_{j=1}^t m_j\op{deg}f_j(T) \text{ if }t>0.\\
\end{cases}\]
Suppose that $M$ is cotorsion over $\Lambda$, i.e., $r=0$. It is easy to see that $\mu_p(E/K_\infty)=0$ if and only if $M^\vee$ is finitely generated as a $\Z_p$-module. Furthermore, if these equivalent conditions are satisfied, then,
\begin{equation}\label{lambda is equal to rank}\lambda_p(E/K_\infty)=\op{rank}_{\Z_p} \left(M^\vee\right).\end{equation}

\par From here on in, we assume that $K=\Q(\sqrt{-D})$ is an imaginary quadratic field in which $p$  splits into primes $\p$ and $\p^*$. Here, $D>0$ is a squarefree integer. There are precisely two $\Z_p$-extensions of $K$ that are Galois over $\Q$, namely, the cyclotomic $\Z_p$-extension $K_{\op{cyc}}$ and the anticyclotomic $\Z_p$-extension $\Kant$. The cyclotomic $\Z_p$-extension  is abelian over $\Q$, while the anticyclotomic $\Z_p$-extension is pro-dihedral over $\Q$. Given a finite prime $v$, set $v(\Kant)$ to denote the set of primes $w$ of $\Kant$ that lie above $v$.

\par It conveniences us to work with Selmer groups over $\Kant$ defined by Greenberg's local conditions. It follows (for instance) from \cite[Lemma 6.5]{kunduraykida} that this definition coincides with that of \eqref{selmer group definition alt}. Given a finite prime $v\nmid p$ of $K$, set
\[\cH_v(E/\Kant):=\prod_{w\in v(\Kant)} H^1(K_{\op{anti},w}, E)[p^\infty].\] Here, $K_{\op{anti},w}$ is taken to be the union of local fields $\bigcup_{n\geq 0} K_{n, w}$. Let $v|p$ be a prime, and $T:=T_p(E)$ be the $p$-adic Tate-module of $E$, viewed as a module over $\op{G}_v:=\op{Gal}(\bar{F}_v/F_v)$. Since $E$ is assumed to have good ordinary reduction at $v$, $T$ fits into a short exact sequence of $\op{G}_v$-modules
\[0\rightarrow T^+\rightarrow T\rightarrow T^-\rightarrow 0. \] The modules $T^{\pm}$ are free $\Z_p$-modules of rank $1$, and the action of $\op{G}_v$ on $T^-$ is unramified. We set $D_v$ to denote the $p$-primary $\op{G}_v$-module defined by $D_v:=T^-\otimes_{\Z_p} \left(\Q_p/\Z_p\right)$. For $w\in v(\Kant)$, set 
\[L_w:=\op{ker}\left(H^1(K_{\op{anti}, w}, E[p^\infty])\longrightarrow H^1(\op{I}_w, D_v)\right),\]
where $\op{I}_w$ is the inertia subgroup of the absolute Galois group of $K_{\op{anti}, w}$. The map above is induced from the natural map $E[p^\infty]\rightarrow D_v$ obtained by tensoring $T\rightarrow T^-$ with $\Q_p/\Z_p$. With respect to the above notation, for $v|p$, we define the local Greenberg Selmer condition as follows
\[\cH_v(E/\Kant):=\prod_{w\in v(\Kant)} H^1(K_{\op{anti}, w}, E[p^\infty])/L_w.\]
Note that this product can be infinite. The Selmer group is then described as follows
\begin{equation}\label{greenberg selmer equation}\op{Sel}_{p^\infty} (E/\Kant):=\op{ker} \left\{H^1(K_S/\Kant, E[p^\infty])\rightarrow \bigoplus_{v\in S}\cH_v(E/\Kant)\right\}.\end{equation}

\subsection{Integrally diophantine extensions and Shlapentokh's criterion}

\par In this section, we briefly recall the notion of an integrally diophantine extension. For further details, the reader is referred to \cite[Section 1.2]{shlapentokh2007hilbert}. Let $A$ be a commutative ring with identity, and $A^n$ be the free $A$-module of rank $n$, consisting of tuples $a=(a_1, \dots, a_n)$ with entries in $A$. Let $m$ and $n$ be positive integers and let $a=(a_1, \dots, a_n)\in A^n$ and $b=(b_1, \dots, b_m)\in A^m$. Denote by $(a,b)\in A^{n+m}$ the tuple $(a_1, \dots, a_n, b_1, \dots, b_m)$. Given a finite set of polynomials, $F_1,\dots, F_k$, we set 
\[\cF(a; F_1, \dots, F_k):=\{b\in A^m\mid F_i(a,b)=0\text{ for all }i=1,\dots, k\}.\]
\begin{definition}\label{diophantine subset}A subset $S$ of $A^n$ is a \emph{diophantine subset} of $A^n$ if for some $m\geq 1$, there are polynomials $F_1, \dots, F_k\in A[x_1, \dots, x_n, y_1, \dots, y_m]$ such that $S$ consists of all $a\in A^n$ for which the set $\cF(a; F_1, \dots, F_k)$ is nonempty.
\end{definition}

\begin{definition}
An extension of number fields $L/K$ is said to be \emph{integrally diophantine} if $\cO_K$ is a diophantine subset of $\cO_L$.
\end{definition}
Let $L', L$ and $K$ be number fields such that 
\[L'\supseteq L\supseteq K.\]Suppose that $L'/L$ and $L/K$ are integrally diophantine extensions. Then, it is a well known fact that $L'/K$ is an integrally diophantine extension. Indeed, this is a special case of \cite[Theorem 2.1.15]{shlapentokh2007hilbert}.

\section{Iwasawa theory of congruences}\label{s 3}
\par Building on the results of Greenberg-Vatsal \cite{greenberg2000iwasawa} and Kundu-Ray \cite{kunduraykida}, we describe the effect of congruences between elliptic curves on Selmer groups over anticyclotomic $\Z_p$-extensions. The results apply in what is called the \emph{definite case}, i.e., the setting in which the Selmer groups are cotorsion over the Iwasawa algebra. The results shall be applied to a congruence between an elliptic curve of rank $0$ and an elliptic curve of positive rank. In the setting in which we work, both the Selmer groups over $\Kant$ shall be cofinitely generated as $\Z_p$-modules. We extend the results of \cite[section 6]{kunduraykida} (which are proven for $p\geq 5$) to include the case $p=3$ as well. Let $E_1$ and $E_2$ be elliptic curves defined over $K$. We let $E_i[p]$ denote the $p$-torsion subgroup of $E_i(\bar{K})$, equipped with the natural action of the absolute Galois group $\op{G}_{K}:=\op{Gal}(\bar{K}/K)$. We say that $E_1$ and $E_2$ are $p$-congruent if $E_1[p]$ is isomorphic to $E_2[p]$  as modules over $\op{G}_{K}$. Let $\bar{\rho}_i:\op{G}_{K}\rightarrow \op{GL}_2(\F_p)$ denote the Galois representation associated to $E_i[p]$. Assume that $p$ splits in $K$, and let $\Sigma_p=\{\p, \p^*\}$ be the primes of $K$ that lie above $p$. We assume that $E_1(K)[p]=E_2(K)[p]=0$. In other words, the Galois modules $E_i[p]$ do not contain the trivial module as a submodule. Note that this condition is automatically satisfied when the Galois representation $\bar{\rho}_i$ is irreducible. Since it is assumed that $\bar{\rho}_1$ is isomorphic to $\bar{\rho}_2$, it follows that $E_1(K)[p]=0$ if and only if $E_2(K)[p]=0$. We assume that both elliptic curves $E_1$ and $E_2$ have good ordinary reduction at all primes $v\in \Sigma_p$. Let $\mathfrak{T}$ be the set of primes $v$ of $K$ such that either $E_1$ or $E_2$ (or both) have bad reduction at $v$. Set $K'$ to denote $K(\mu_p)$.

\begin{definition}\label{definition Sigma}
For $i=1,2$, the set $\Sigma(E_i)$ consists of primes $v\in \mathfrak{T}$ such that the following conditions is satisfied.
\begin{enumerate}
\item If the elliptic curve $E_i$ has good reduction at $v$, then, $p\mid \#E_i(\F_v)$.
\item If the elliptic curve $E_i$ has bad reduction at $v$, and $p\geq 5$, then $E_i$ has split multiplicative reduction at some prime $w|v$ of $K'$.
\item If the elliptic curve $E_i$ has bad reduction at $v$, and $p=3$, then, at some prime $w|v$ of $K'$, the elliptic curve $E_i$ has either split multiplicative reduction or has additive reduction of type $\rm{IV}$ or $\rm{IV}^*$. 
\end{enumerate}
We set $\Sigma:=\Sigma(E_1)\cup \Sigma(E_2)$ and $S:=\mathfrak{T}\cup \Sigma_p$.
\end{definition}

 Note that $\Sigma$ is a subset of $S$. For $i=1,2$, we define the imprimitive Selmer group as follows \begin{equation}\label{def of imprimitive selmer}\op{Sel}_{p^\infty}^{\Sigma}(E_i/\Kant):=\op{ker}\left(H^1(\Kant, E_i[p^\infty])\longrightarrow \bigoplus_{v\in S\backslash \Sigma} \cH_v(E_i/\Kant)\right).\end{equation}
In other words, we relax the local conditions at the primes $v\in \Sigma$. There is one crucial difference between the cyclotomic and anticyclotomic extensions. It is easy to see that all primes are finitely decomposed in the cyclotomic $\Z_p$-extension (cf. \cite[Proposition 2]{brink2007prime}). This is not the case for other $\Z_p$-extensions. Let $\ell$ be a prime number and $v|\ell$ be a prime of $K$ that lies above $\ell$. By an application of class field theory, $v$ is infinitely decomposed in $\Kant$ if and only if $\ell\neq p$ and $\ell$ is inert or ramified in $K$ (cf. \cite[p.2132, ll. 8-10]{brink2007prime}). Throughout this section, we make the following assumption.
\begin{ass}\label{main ass section 3}
Assume that all primes $v\in \Sigma$ are finitely decomposed in $\Kant$. Equivalently, we assume that $\ell$ splits into a product of two primes of $K$ for every prime $\ell$ divisible by some $v \in \Sigma$.
\end{ass}
For $v\in S$, we refer to \cite[section 6]{kunduraykida} for the definition of the local condition $\cH_v(E[p]/\Kant)$.
The \emph{residual imprimitive Selmer group} is defined as follows 
\[\op{Sel}_{p^\infty}^{\Sigma} (E[p]/\Kant):=\op{ker}\left\{H^1(K_S/\Kant, E[p^\infty])\rightarrow \bigoplus_{v\in S\backslash \Sigma} \mathcal{H}_v(E[p]/\Kant)\right\}.\]

\begin{lemma}\label{iota map injective}
For $v\in S\backslash \Sigma$, the natural map 
\begin{equation}\label{iota map equation}\mathcal{H}_v(E[p]/\Kant)\rightarrow \mathcal{H}_v(E[p^\infty]/\Kant)[p]\end{equation} is injective.
\end{lemma}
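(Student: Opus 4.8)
\emph{Proof proposal.} Since $\mathcal{H}_v(E[p]/\Kant)$ and $\mathcal{H}_v(E[p^\infty]/\Kant)$ are both products over the primes $w\mid v$ of $\Kant$, and the map \eqref{iota map equation} is the product of the corresponding local maps, and products of injections are injections, it suffices to treat a single prime $w\mid v$. Write $F_w:=K_{\op{anti},w}$, and let $L_w^{(p)}\subseteq H^1(F_w,E[p])$ and $L_w\subseteq H^1(F_w,E[p^\infty])$ denote the local conditions cutting out the $w$-components of $\mathcal{H}_v(E[p]/\Kant)$ and $\mathcal{H}_v(E/\Kant)$ respectively (cf. \cite[section 6]{kunduraykida}). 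The inclusion $E[p]\hookrightarrow E[p^\infty]$ sits in the short exact sequence $0\to E[p]\to E[p^\infty]\xrightarrow{\ p\ }E[p^\infty]\to 0$ of $\op{Gal}(\bar F_w/F_w)$-modules, whose long exact cohomology sequence shows that the induced map $\iota_1\colon H^1(F_w,E[p])\to H^1(F_w,E[p^\infty])$ has image exactly $H^1(F_w,E[p^\infty])[p]$, with kernel $\op{Im}\!\big(\delta\colon E[p^\infty](F_w)/p\to H^1(F_w,E[p])\big)$. The map \eqref{iota map equation} is the map on quotients induced by $\iota_1$; once one checks $\iota_1(L_w^{(p)})\subseteq L_w$ (so that this quotient map is defined), its kernel at $w$ is $\iota_1^{-1}(L_w)/L_w^{(p)}$. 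Thus the whole statement reduces to the identity $\iota_1^{-1}(L_w)=L_w^{(p)}$.

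For $v\nmid p$ this is immediate from the definitions: both $L_w$ and $L_w^{(p)}$ are the kernels of the maps to $H^1(F_w,E)$ induced by $E[p^\infty]\hookrightarrow E$ and $E[p]\hookrightarrow E$, and since the two composites $H^1(F_w,E[p])\to H^1(F_w,E)$ (directly, and through $\iota_1$) agree, a class $c$ satisfies $\iota_1(c)\in L_w$ if and only if its image in $H^1(F_w,E)$ vanishes, i.e. if and only if $c\in L_w^{(p)}$; equivalently, \eqref{iota map equation} is the inclusion $H^1(F_w,E)[p]\hookrightarrow H^1(F_w,E)[p^\infty]$. For $v\mid p$, recall $L_w=\ker\!\big(H^1(F_w,E[p^\infty])\to H^1(\op{I}_w,D_v)\big)$ and $L_w^{(p)}=\ker\!\big(H^1(F_w,E[p])\to H^1(\op{I}_w,D_v[p])\big)$, induced by $E[p^\infty]\twoheadrightarrow D_v$ and its restriction $E[p]\to D_v[p]$. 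Chasing the commutative square relating $\iota_1$ to restriction to $\op{I}_w$, the claim $\iota_1^{-1}(L_w)=L_w^{(p)}$ reduces to injectivity of $H^1(\op{I}_w,D_v[p])\to H^1(\op{I}_w,D_v)$. This follows from the long exact sequence attached to $0\to D_v[p]\to D_v\xrightarrow{\ p\ }D_v\to 0$ over $\op{I}_w$: as $E$ has good ordinary reduction at $v$ the $\op{I}_w$-action on $D_v$ is trivial, so $H^0(\op{I}_w,D_v)=D_v\cong\Q_p/\Z_p$, on which multiplication by $p$ is surjective; hence the connecting map vanishes and $H^1(\op{I}_w,D_v[p])\hookrightarrow H^1(\op{I}_w,D_v)$.

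Combining the two cases gives $\iota_1^{-1}(L_w)=L_w^{(p)}$ for every $w\mid v$ with $v\in S\setminus\Sigma$, so the kernel of \eqref{iota map equation} at $w$ is $\iota_1^{-1}(L_w)/L_w^{(p)}=0$; taking the product over $w\mid v$ yields the injectivity of \eqref{iota map equation}. I expect the only delicate point to be the bookkeeping at $v\mid p$: one must identify the mod-$p$ local condition $L_w^{(p)}$ of \cite{kunduraykida} with $\ker\big(H^1(F_w,E[p])\to H^1(\op{I}_w,D_v[p])\big)$ and verify that the relevant cohomological squares commute. The inputs that $D_v$ is unramified and $p$-divisible are what make this clean, and this is precisely where the good ordinary hypothesis at the primes above $p$ is used.
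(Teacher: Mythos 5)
There is a genuine gap, and it sits exactly in the one case where the paper has to do real work: $v\nmid p$ with bad reduction. Your argument there rests on a definition of the residual local condition that is not the one in use. In \cite[section 6]{kunduraykida} (following Greenberg--Vatsal \cite{greenberg2000iwasawa}), the local term $\mathcal{H}_v(E[p]/\Kant)$ at a prime $w\nmid p$ is \emph{not} the quotient of $H^1(K_{\op{anti},w},E[p])$ by the image of the mod-$p$ Kummer map; the local condition is chosen to depend only on the Galois module $E[p]$, and at $w\nmid p$ one takes the full group $H^1(K_{\op{anti},w},E[p])$ (i.e.\ $L_w^{(p)}=0$). Consequently the $w$-component of \eqref{iota map equation} is the map $H^1(K_{\op{anti},w},E[p])\rightarrow H^1(K_{\op{anti},w},E[p^\infty])[p]$, whose kernel, by the long exact sequence you yourself wrote down, is $E(K_{\op{anti},w})[p^\infty]/p\,E(K_{\op{anti},w})[p^\infty]$. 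This is not zero ``by definition''; its vanishing is the actual content of the lemma, and it is precisely where the hypothesis $v\in S\setminus\Sigma$ enters. Indeed your argument never uses $v\notin\Sigma$ and would prove injectivity at \emph{every} prime of $S$, including the primes of $\Sigma$ --- a clear warning sign, since then Definition \ref{definition Sigma} and Assumption \ref{main ass section 3} would be pointless, and the paper's own proof (which deduces injectivity from $E(K'_{\op{anti},w'})[p^\infty]=0$) would be a detour through irrelevant hypotheses. Moreover, redefining the residual local conditions via the Kummer image of $E(K_{\op{anti},w})/p$ would defeat the purpose of the residual Selmer group in Proposition \ref{prop 3.3} and Theorem \ref{thm 3.5}: that image depends on $E$ itself, not only on $E[p]$, so the resulting groups for $E_1$ and $E_2$ could no longer be identified.

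What the paper actually does for $p=3$ at a bad prime $v\nmid p$, $v\notin\Sigma$, is to kill the kernel above by proving the vanishing of local $p$-primary torsion: it passes to $K'=K(\mu_p)$, uses that $K'_{\op{anti},w'}/K'_w$ is pro-$p$ together with \cite[Lemma 6.9]{kunduraykida} to reduce to $E(K'_w)[p^\infty]=0$, shows $E_0(K'_w)[p^\infty]=0$ as in \cite{hachimori1999analogue}, and then uses the Kodaira--N\'eron/Tamagawa table \cite{silverman1994advanced} to get $3\nmid[E(K'_w):E_0(K'_w)]$ --- this is exactly why split multiplicative reduction and types $\rm{IV}$, $\rm{IV}^*$ are excluded in the definition of $\Sigma$ when $p=3$. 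None of that analysis can be bypassed by a diagram chase. Your treatment of $v\mid p$ (injectivity of $H^1(\op{I}_w,D_v[p])\rightarrow H^1(\op{I}_w,D_v)$ from divisibility of $D_v$ with unramified, hence trivial, $\op{I}_w$-action) is in the right spirit and close to the cited argument, but it too hinges on matching the actual definition of the residual condition at $p$, which you only flag as a point to check. As written, the proposal does not prove the lemma: the key step at bad primes away from $p$ is trivialized by an incorrect identification of the local conditions.
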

\begin{proof}
When $p\geq 5$, the above result is \cite[Lemma 6.10]{kunduraykida}. Therefore, we consider the case when $p=3$. 
\par First, we assume that $v|p$. In this case, the result follows from part (1) in the proof of Lemma 6.10 from \emph{loc. cit}. Next, suppose that $v\nmid p$ is a prime at which $E$ has good reduction. The injectivity of the map follows from part (2) in the proof of the above mentioned Lemma. The only case to consider therefore is when $v\nmid p$ is a prime at which $E$ has bad reduction. Recall that $K'=K(\mu_p)$, and set $K'_{\op{anti}}=K'\cdot K_{\op{anti}}$. It follows from the proof of (3) of Lemma 6.10 of \emph{loc. cit.} that the map \eqref{iota map equation} is injective if $E(K'_{\op{anti}, w'})[p^\infty]=0$ for all primes $w'|v$ of $K'_{\op{anti}}$. Let $w$ be a prime of $K'$ such that $w|v$. Let $w'|w$ be a prime of $K'_{\op{anti}}$. Since $K'_{\op{anti},w'}/K'_w$ is a pro-$p$ extension, it follows from \cite[Lemma 6.9]{kunduraykida} that \[E(K'_w)[p^\infty]=0\Rightarrow E(K'_{\op{anti}, w'})[p^\infty]=0.\]
Therefore, it suffices to show that $E(K'_w)[p^\infty]=0$ for all $w|v$ of $K'$. We recall part (3) of the Definition \ref{definition Sigma}. Note that in this case $p=3$, $v\notin \Sigma$ and $E$ is assumed to have bad reduction at $v$. Therefore, $E$ has non-split multiplicative reduction or additive reduction of Kodaira type $I\notin \{\rm{IV}, \rm{IV}^*\}$ at all primes $w|v$ of $K'$. Let $E_0(K'_w)$ be the subgroup of $E$ consisting of points of non-singular reduction. It follows from the proof of \cite[Proposition 5.1 (iii)]{hachimori1999analogue} that $E_0(K'_w)[p^\infty]=0$. It suffices to show that $p\nmid [E(K'_w):E_0(K'_w)]$ for all primes $w|v$ of $K'$. Since $p=3$, and $E$ has non-split multiplicative reduction or additive reduction of Kodaira type $I\notin \{\rm{IV}, \rm{IV}^*\}$, the result follows from \cite[4th row of Table 4.1, p. 365]{silverman1994advanced}.  
\end{proof}

\begin{proposition}\label{prop 3.3}
Let $E$ be either $E_1$ or $E_2$ and recall that $\Sigma=\Sigma(E_1)\cup \Sigma(E_2)$. Assume that $E(K)[p]=0$. Then, the natural map 
\[\op{Sel}_{p^\infty}^{\Sigma} (E[p]/\Kant)\xrightarrow{\sim} \op{Sel}_{p^\infty}^{\Sigma} (E/\Kant)[p]\] is an isomorphism. 
\end{proposition}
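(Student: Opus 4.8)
The plan is to compare the residual imprimitive Selmer group with the $p$-torsion of the imprimitive $p^\infty$-Selmer group via the cohomological snake-lemma argument that is standard in this circle of ideas (Greenberg--Vatsal, \cite{greenberg2000iwasawa}, \cite{kunduraykida}). First I would consider the short exact sequence of $\op{G}_K$-modules $0\to E[p]\to E[p^\infty]\xrightarrow{p} E[p^\infty]\to 0$ and take the associated long exact sequence in $H^*(K_S/\Kant,-)$. The connecting piece gives
\[0\to H^1(K_S/\Kant, E[p])/\big(H^0(K_S/\Kant,E[p^\infty])\otimes\Z/p\big)\to H^1(K_S/\Kant,E[p^\infty])[p]\to 0\]
in the sense that the map $H^1(K_S/\Kant,E[p])\to H^1(K_S/\Kant,E[p^\infty])[p]$ has kernel the image of $H^0(K_S/\Kant,E[p^\infty])=E(\Kant)[p^\infty]$ under the boundary map. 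The hypothesis $E(K)[p]=0$ forces $E(\Kant)[p]=0$ (since $\op{Gal}(\Kant/K)$ is pro-$p$, a nontrivial $p$-torsion point fixed by an open subgroup would descend), hence $E(\Kant)[p^\infty]$ is $p$-divisible-free, i.e. trivial, so this boundary map is zero and
\[H^1(K_S/\Kant,E[p])\xrightarrow{\ \sim\ } H^1(K_S/\Kant,E[p^\infty])[p]\]
is an isomorphism.

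Next I would run the same $0\to E[p]\to E[p^\infty]\xrightarrow{p}E[p^\infty]\to 0$ argument locally at each $v\in S\backslash\Sigma$, producing a commutative diagram whose rows are the defining sequences of $\op{Sel}_{p^\infty}^{\Sigma}(E[p]/\Kant)$ and $\op{Sel}_{p^\infty}^{\Sigma}(E/\Kant)[p]$: the left vertical map is the global isomorphism just established, and the right vertical map is $\bigoplus_{v\in S\backslash\Sigma}\big(\mathcal{H}_v(E[p]/\Kant)\to \mathcal{H}_v(E[p^\infty]/\Kant)[p]\big)$. By Lemma \ref{iota map injective} each of these local maps is injective. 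A diagram chase (the snake lemma applied to the two horizontal complexes) then yields that the induced map on kernels $\op{Sel}_{p^\infty}^{\Sigma}(E[p]/\Kant)\to \op{Sel}_{p^\infty}^{\Sigma}(E/\Kant)[p]$ is injective with cokernel controlled by the kernel of the global map on $H^2$, which is zero by the isomorphism above, so surjectivity also holds. This gives the claimed isomorphism.

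The one subtlety I expect to be the real obstacle is the surjectivity — equivalently, showing that a class in $\op{Sel}_{p^\infty}^{\Sigma}(E/\Kant)[p]$, which a priori is only the $p$-torsion in the $p^\infty$-Selmer group, actually lifts to a class in $H^1(K_S/\Kant,E[p])$ landing in the residual Selmer group. The global lift exists by the displayed isomorphism; what must be checked is that its local restrictions land in the residual local conditions $\mathcal{H}_v(E[p]/\Kant)$ rather than merely dying in $\mathcal{H}_v(E/\Kant)$. This is exactly where the injectivity of \eqref{iota map equation} from Lemma \ref{iota map injective} is used: injectivity of the local maps means that a class whose image in $\mathcal{H}_v(E/\Kant)[p]$ vanishes already has vanishing image in $\mathcal{H}_v(E[p]/\Kant)$. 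For primes $v\mid p$ one additionally needs to track the Greenberg quotient $D_v$ through the same $p$-power sequence, but this is again covered by the $v\mid p$ case of Lemma \ref{iota map injective} (part (1) of the proof of \cite[Lemma 6.10]{kunduraykida}). Assembling these local facts with the global isomorphism via the snake lemma completes the proof; I would present it as a single commutative diagram with exact rows and invoke the five-lemma-type argument rather than chasing elements by hand.
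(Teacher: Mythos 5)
Your proposal is correct and follows essentially the same route as the paper, whose proof simply invokes the Greenberg--Vatsal style argument of \cite[Proposition 6.11]{kunduraykida} (the multiplication-by-$p$ sequence, vanishing of $E(\Kant)[p]$ from $E(K)[p]=0$ via the pro-$p$ property of $\op{Gal}(\Kant/K)$, and injectivity of the local maps from Lemma \ref{iota map injective}). The only slight imprecision is your remark that the cokernel is ``controlled by the kernel of the global map on $H^2$''; surjectivity already follows from the surjectivity of the middle vertical map together with injectivity of the right-hand local maps, exactly as you describe in your final paragraph.
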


\begin{proof}
The result follows verbatim from the proof of \cite[Proposition 6.11]{kunduraykida} and Lemma \ref{iota map injective}. 
\end{proof}
\par We introduce some further notation.  Let $E$ be either $E_1$ or $E_2$. Recall that by the Assumption \ref{main ass section 3}, each prime $v\in \Sigma$ is finitely decomposed in $\Kant$. For $v\in \Sigma$, let $\ell$ be the prime number for which $v|\ell$. Note that $\ell\neq p$, since both elliptic curves $E_1$ and $E_2$ have good reduction at all primes $v\in \Sigma_p$. Denote by $v^*$ the conjugate prime, so that $\ell\cO_K=v v^*$. We identify $K_v$ with $\Q_\ell$, and set $\Q_\ell^{\op{nr}}$ to denote the unramified $\Z_p$-extension of $\Q_\ell$. Recall that $s_v$ is the number of primes of $\Kant$ that lie above $v$. We note that \cite[Theorem 2, p. 2133]{brink2007prime} gives an explicit algorithm to compute $s_v$. Let $\sigma_{E}^{(v)}$ denote the $\Z_p$-corank of $\mathcal{H}_v(E/\Kant)$, for $v\in \Sigma$.  Recall that $d_v(E)$ is the multiplicity of $\ell^{-1}$ as a root of the local $L$-factor $P_v(X)$.

%Fix an algebraic closure of $K_v\simeq \Q_\ell$, and let $w\in v(\Kant)$. The extension $K_{\op{anti},w}/K_v$ is infinite since $v$ is assumed to be finitely decomposed in $\Kant$. Note that since $v\nmid p$, $K_{\op{anti},w}$ is the unramified $\Z_p$-extension of $\Q_\ell$, and hence, $K_{\op{anti},w}\simeq \Q_\ell^{\op{nr}}$. We set $d_v$ to denote the $\Z_p$-corank of $H^1(K_{\op{anti},w}, E[p^\infty])$. Note that the definition is independent of the choice of the prime $w$. 

\begin{proposition}\label{computing sigma_v}
For $v\in \Sigma$, the $\mu$-invariant of $\cH_v(E/\Kant)$ is $0$ and $\sigma_E^{(v)}=s_vd_v(E)$.
\end{proposition}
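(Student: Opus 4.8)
The plan is to analyze the local cohomology group $\cH_v(E/\Kant) = \prod_{w \in v(\Kant)} H^1(K_{\op{anti},w}, E)[p^\infty]$ one prime $w$ at a time, using that $v$ is finitely decomposed (Assumption \ref{main ass section 3}), so there are exactly $s_v$ such primes $w$ and each completion $K_{\op{anti},w}$ is the unramified $\Z_p$-extension of $K_v = \Q_\ell$ (here $\ell \neq p$). First I would reduce, via local Tate duality and the standard fact (as in Greenberg's work, e.g. \cite[Proposition 2]{greenberg2001introduction} or the analogous computation in \cite{greenberg2000iwasawa}) that for $\ell \neq p$ the group $H^1(K_{\op{anti},w}, E)[p^\infty]$ is cofinitely generated over $\Z_p$ with $\mu$-invariant zero, and its $\Z_p$-corank equals the corank of $H^1$ of the unramified extension with coefficients in the inertia-coinvariants of $E[p^\infty]$. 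Concretely, writing $\Gamma_w = \op{Gal}(K_{\op{anti},w}/\Q_\ell) \cong \Z_p$, an inflation-restriction and local Euler characteristic argument identifies the $\Z_p$-corank of $H^1(K_{\op{anti},w},E)[p^\infty]$ with the $\Z_p$-corank of $H^0$ of $\Gamma_w$ acting on $\left(E[p^\infty]^{\op{I}_v}\right)$ twisted appropriately — which in turn is governed by how many times the Frobenius $\sigma_v$ has eigenvalue $1$ on $V_p'(E) = (V_p E)_{\op{I}_v}$ after the cyclotomic-free (anticyclotomic) twist.

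The key computation is then to pin down this corank in terms of $d_v(E)$. Here I would use that the anticyclotomic $\Z_p$-extension of $\Q_\ell$ (for $\ell$ split in $K$) is the unramified one, so that the action of $\Gamma_w$ on the unramified module $V_p'$ is through powers of $\sigma_v$, and the relevant invariant is the multiplicity of $1$ as an eigenvalue of $\sigma_v$ acting on $V_p' \otimes (\text{the character by which }\Gamma_w \text{ acts})$. Translating through the recipe in the introduction — $P_v(X) = \det(1 - \sigma_{v|V_p'} X)$, with $d_v(E)$ the multiplicity of $X = \ell^{-1}$ as a root of the mod-$p$ reduction $\widetilde P_v(X)$ — the relevant eigenvalue count at the level of $H^1(K_{\op{anti},w}, E)[p^\infty]$ becomes exactly $d_v(E)$ per prime $w$. (The shift from "eigenvalue $1$" to "root at $\ell^{-1}$" is the usual Weil-pairing/duality twist: $H^1(\cdot, E)$ is dual to $H^1(\cdot, T_p E)$, and the Tate twist moves the Frobenius eigenvalue by $\ell$.) Since there are $s_v$ primes $w$ above $v$ in $\Kant$, summing gives $\sigma_E^{(v)} = s_v d_v(E)$. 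The vanishing of the $\mu$-invariant follows since each $H^1(K_{\op{anti},w},E)[p^\infty]$ is already cofinitely generated over $\Z_p$ (not merely over $\Lambda$), which forces $\mu = 0$; equivalently one checks $H^1(K_{\op{anti},w}, E[p])$ has the expected $\F_p$-dimension matching $d_v(E)$, with no extra $p$-divisible part.

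I expect the main obstacle to be the bookkeeping of twists: correctly matching "Frobenius eigenvalue $1$ on the relevant cohomologically-dual module" with "$\ell^{-1}$ a root of $P_v(X)$" and verifying this is insensitive to whether $E$ has good, multiplicative, or additive reduction at $v$ (in the additive case $P_v(X) = 1$ has no root at $\ell^{-1}$, so $d_v(E) = 0$ and indeed $\cH_v(E/\Kant)$ should have corank $0$; in the split multiplicative case $P_v(X) = 1 - X$, which has $\ell^{-1}$ as a root only when $\ell \equiv 1 \bmod p$, matching the condition $p \mid \#E(\F_v)$-type phenomena that define $\Sigma$). A clean way to organize this is to cite the corresponding computation in \cite[section 6]{kunduraykida} for $p \geq 5$ and observe that the argument there is characteristic-free in $p$ (odd), so it applies verbatim for $p = 3$; the only input specific to $p=3$ — the behavior at additive primes of type $\op{IV}, \op{IV}^*$ — has already been handled in Lemma \ref{iota map injective}. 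Thus the proof reduces to invoking \cite[section 6]{kunduraykida} together with the local corank formula recalled above.
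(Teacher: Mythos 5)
Your proposal is correct and is essentially the paper's argument: the paper's entire proof is a citation to the proof of \cite[Proposition 2.4]{greenberg2000iwasawa}, and your sketch reconstructs exactly that computation (locally at $w\mid v$ the anticyclotomic extension is the unramified $\Z_p$-extension of $\Q_\ell$, the corank per prime is the twisted Frobenius eigenvalue count giving $d_v(E)$, and summing over the $s_v$ primes with $\mu=0$ forced by $\Z_p$-cofinite generation). The only cosmetic slips — the ``$H^0$ of $\Gamma_w$'' identification should really be invariants of the prime-to-$p$ unramified quotient acting on $H^1(\op{I}_v,\cdot)\simeq (V_p)_{\op{I}_v}(-1)$, and Lemma \ref{iota map injective} plays no role here since the $p=3$ versus $p\geq 5$ distinction is irrelevant to this local corank computation — do not affect the argument.
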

\begin{proof}
The above result follows from the proof of \cite[Proposition 2.4]{greenberg2000iwasawa}.
\end{proof}
\begin{theorem}\label{thm 3.5}
Let $p$ be an odd prime and $K$ be an imaginary quadratic field in which $p$ splits. Let $E_1$ and $E_2$ be two elliptic curves defined over $K$. Assume that the following conditions hold
\begin{enumerate}
\item Both elliptic curves have good ordinary reduction at the primes $v\in\Sigma_p$.
\item The elliptic curves $E_1$ and $E_2$ are $p$-congruent. In other words, $E_1[p]$ and $E_2[p]$ are isomorphic as modules over $\op{G}_K$.
\item We require that $E_1(K)[p]=E_2(K)[p]=0$. (This assumption is clearly satisfied when the residual representations $\bar{\rho}_i$ on $E_i[p]$ are irreducible as modules over $\op{G}_K$.)
\item Assumption \ref{main ass section 3} is satified. In other words, all primes $v\in \Sigma$ (cf. Definition \ref{definition Sigma}) are finitely decomposed in $\Kant$.
\end{enumerate}Then, the Selmer group $\op{Sel}_{p^\infty}(E_1/\Kant)$ is cotorsion over $\Lambda$ with $\mu=0$ if and only if $\op{Sel}_{p^\infty}(E_2/\Kant)$ is cotorsion over $\Lambda$ with $\mu=0$. Furthermore, suppose that indeed both Selmer groups $\op{Sel}_{p^\infty}(E_i/\Kant)$ are cotorsion over $\Lambda$ with $\mu=0$. Then, we have that 
\[\lambda_p(E_2/\Kant)-\lambda_p(E_1/\Kant)=\sum_{v\in \Sigma} \left(\sigma_{E_1}^{(v)}-\sigma_{E_2}^{(v)}\right)=\sum_{v\in \Sigma} s_v\left(d_v(E_1)-d_v(E_2)\right).\]
\end{theorem}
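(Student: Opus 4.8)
The plan is to follow the strategy of Greenberg--Vatsal \cite{greenberg2000iwasawa} and its anticyclotomic refinement in \cite[section 6]{kunduraykida}, passing between an elliptic curve and its residual representation via the imprimitive Selmer groups. First I would observe that, because $E_1[p] \cong E_2[p]$ as $\op{G}_K$-modules and the local conditions defining $\cH_v(E_i[p]/\Kant)$ for $v \in S \setminus \Sigma$ depend only on the residual representation (this is why $\Sigma$ was chosen as it was — precisely the primes where the local conditions for $E_i[p^\infty]$ and $E_i[p]$ can differ), there is a canonical isomorphism
\[
\op{Sel}_{p^\infty}^{\Sigma}(E_1[p]/\Kant) \xrightarrow{\sim} \op{Sel}_{p^\infty}^{\Sigma}(E_2[p]/\Kant)
\]
of $\Lambda$-modules. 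This requires checking the local conditions match at each $v \in S \setminus \Sigma$; for $v \mid p$ one uses good ordinary reduction and the fact that $T^-$ is unramified, for $v \nmid p$ of good reduction the condition is unramified cohomology, and for $v \nmid p$ of bad reduction one uses the analysis underlying Lemma \ref{iota map injective}. This step is essentially done in \emph{loc. cit.} for $p \geq 5$ and, by Lemma \ref{iota map injective} and Proposition \ref{prop 3.3}, goes through verbatim for $p = 3$.

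Next I would establish the equivalence of the two cotorsion-with-$\mu = 0$ statements. By Proposition \ref{prop 3.3}, for each $i$ we have $\op{Sel}_{p^\infty}^{\Sigma}(E_i/\Kant)[p] \cong \op{Sel}_{p^\infty}^{\Sigma}(E_i[p]/\Kant)$, and the latter are isomorphic to each other by the previous paragraph; hence $\op{Sel}_{p^\infty}^{\Sigma}(E_1/\Kant)[p]$ and $\op{Sel}_{p^\infty}^{\Sigma}(E_2/\Kant)[p]$ are isomorphic, in particular one is finite (equivalently, of finite $\F_p$-dimension, since these are cofinitely generated) if and only if the other is. A cofinitely generated $\Lambda$-module $M$ with $M^\vee$ having $M^\vee/pM^\vee$ of finite $\F_p$-rank — equivalently $M[p]$ finite — is exactly a $\Lambda$-module that is cotorsion with $\mu = 0$ (by Nakayama applied to $M^\vee$ and the structure theorem \eqref{decomposition}). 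Therefore $\op{Sel}_{p^\infty}^{\Sigma}(E_1/\Kant)$ is cotorsion with $\mu = 0$ iff $\op{Sel}_{p^\infty}^{\Sigma}(E_2/\Kant)$ is. To transfer this to the primitive Selmer groups one uses the defining exact sequence
\[
0 \to \op{Sel}_{p^\infty}(E_i/\Kant) \to \op{Sel}_{p^\infty}^{\Sigma}(E_i/\Kant) \to \bigoplus_{v \in \Sigma} \cH_v(E_i/\Kant),
\]
together with Proposition \ref{computing sigma_v}, which says each $\cH_v(E_i/\Kant)$ is cofinitely generated over $\Z_p$ (corank $s_v d_v(E_i)$, $\mu$-invariant $0$): adding or removing the finitely many local terms at $v \in \Sigma$ changes neither cotorsion-ness nor the vanishing of $\mu$.

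Finally, for the $\lambda$-invariant formula, assume both primitive Selmer groups are cotorsion with $\mu = 0$. Then by the previous remarks both imprimitive Selmer groups are cotorsion with $\mu = 0$, and since the isomorphism $\op{Sel}_{p^\infty}^{\Sigma}(E_1[p]/\Kant) \cong \op{Sel}_{p^\infty}^{\Sigma}(E_2[p]/\Kant)$ identifies the $p$-torsion of $\op{Sel}_{p^\infty}^{\Sigma}(E_1/\Kant)$ with that of $\op{Sel}_{p^\infty}^{\Sigma}(E_2/\Kant)$, and for a cofinitely generated $\Z_p$-module $M$ one has $\lambda = \dim_{\F_p} M[p]$, I conclude
\[
\lambda_p^{\Sigma}(E_1/\Kant) = \lambda_p^{\Sigma}(E_2/\Kant),
\]
where $\lambda_p^{\Sigma}$ denotes the $\lambda$-invariant of the imprimitive Selmer group. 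It then remains to compare primitive and imprimitive $\lambda$-invariants. From the exact sequence above, if the map to $\bigoplus_{v \in \Sigma} \cH_v(E_i/\Kant)$ is surjective one gets $\lambda_p^{\Sigma}(E_i/\Kant) = \lambda_p(E_i/\Kant) + \sum_{v \in \Sigma} \sigma_{E_i}^{(v)}$; surjectivity (or at least that the cokernel does not contribute, i.e. is finite) follows from the global Euler characteristic / Poitou--Tate argument used in \cite{greenberg2000iwasawa} under the running hypotheses, and $\sigma_{E_i}^{(v)} = s_v d_v(E_i)$ by Proposition \ref{computing sigma_v}. Subtracting the two identities for $i = 1, 2$ and using $\lambda_p^{\Sigma}(E_1/\Kant) = \lambda_p^{\Sigma}(E_2/\Kant)$ yields
\[
\lambda_p(E_2/\Kant) - \lambda_p(E_1/\Kant) = \sum_{v \in \Sigma}\left(\sigma_{E_1}^{(v)} - \sigma_{E_2}^{(v)}\right) = \sum_{v \in \Sigma} s_v\left(d_v(E_1) - d_v(E_2)\right),
\]
as desired. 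I expect the main obstacle to be the bad-reduction local analysis at $p = 3$ — verifying that the local conditions for $E[p^\infty]$ and $E[p]$ really do agree outside $\Sigma$ and that $\cH_v$ behaves as claimed for the Kodaira types $\mathrm{IV}, \mathrm{IV}^*$ — but this is precisely the content that Lemma \ref{iota map injective}, Proposition \ref{prop 3.3} and Proposition \ref{computing sigma_v} were set up to handle, so the remainder is bookkeeping parallel to \cite[section 6]{kunduraykida}.
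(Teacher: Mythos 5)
Your overall route is the same as the paper's: identify the residual $\Sigma$-imprimitive Selmer groups of $E_1$ and $E_2$, use Proposition \ref{prop 3.3} (via Lemma \ref{iota map injective}) to identify them with the $p$-torsion of the imprimitive Selmer groups, deduce the equivalence of ``cotorsion with $\mu=0$'', and then pass from imprimitive to primitive invariants through the exact sequence and Proposition \ref{computing sigma_v}. The equivalence part of your argument is fine.

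There is, however, a genuine gap in the $\lambda$-comparison step. You assert that ``for a cofinitely generated $\Z_p$-module $M$ one has $\lambda=\dim_{\F_p}M[p]$'' and conclude $\lambda_p^{\Sigma}(E_1/\Kant)=\lambda_p^{\Sigma}(E_2/\Kant)$ from $\op{Sel}^{\Sigma}_{p^\infty}(E_1/\Kant)[p]\simeq \op{Sel}^{\Sigma}_{p^\infty}(E_2/\Kant)[p]$. That identity is false in general: for $M=\Q_p/\Z_p\oplus \Z/p\Z$ one has $\lambda(M)=1$ but $\dim_{\F_p}M[p]=2$. In general $\dim_{\F_p}M[p]=\lambda(M)+\dim_{\F_p}T_i[p]$, where $T_i$ is the (finite) $\Z_p$-torsion of $M^\vee$, so from the isomorphism of $p$-torsion subgroups you only get $\lambda^{\Sigma}(E_1)+t_1=\lambda^{\Sigma}(E_2)+t_2$, which does not yield the desired equality. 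The missing ingredient, which is exactly what the paper invokes, is that the dual of the $\Sigma$-imprimitive Selmer group has no nonzero finite $\Lambda$-submodules (equivalently, the imprimitive Selmer group has no proper $\Lambda$-submodules of finite index); this is \cite[Proposition 6.18]{kunduraykida}, and it forces $M_i^\vee$ to be $\Z_p$-free of rank $\lambda$ once $\mu=0$, whence $\dim_{\F_p}M_i[p]=\lambda(M_i)$. This divisibility statement is a nontrivial structural result (it is also what underlies the surjectivity onto $\bigoplus_{v\in\Sigma}\cH_v(E_i/\Kant)$ that you need for the exact relation $\lambda^{\Sigma}=\lambda+\sum_{v\in\Sigma}\sigma^{(v)}$), and your proposal neither proves it nor cites it; with it inserted, the rest of your argument goes through as in the paper.
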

\begin{proof}
When $p\geq 5$, the above result follows from \cite[Theorem 6.14, Proposition 6.19]{kunduraykida}. When $p=3$, the same proof applies verbatim from Proposition \ref{prop 3.3}, the key ingredient being Lemma \ref{iota map injective}. We provide a sketch of the proof. 
\par Let $M$ be a cofinitely generated $\Lambda$-module with $\mu$-invariant $\mu(M)$ and $\lambda$-invariant $\lambda(M)$. Then, we recall that there is a decomposition map \eqref{decomposition} 
\[\Phi:M^\vee\rightarrow  \Lambda^r\oplus \left(\bigoplus_{i=1}^s \Lambda/(p^{n_i})\right)\oplus \left( \bigoplus_{j=1}^t \Lambda/(f_j(T)^{m_j})\right)\]
which is a map of $\Lambda$-modules whose kernel and cokernel are finite. The $\mu$-invariant $\mu(M)$ is defined to be the sum $\sum_i n_i$ and the $\lambda$-invariant is $\lambda(M):=\sum_j m_j \op{deg} f_j$.
It follows that
\[M\text{ is cotorsion with }\mu(M)=0\Leftrightarrow M \text{ is cofinitely generated as a }\Z_p\text{-module.}\]Moreover, if these conditions are satisfied, then, \[\op{corank}_{\Z_p} M=\lambda(M).\]We note that $M$ is cofinitely generated as a $\Z_p$-module if and only if $M[p]$ is finite. Assume that $M$ has no proper $\Lambda$-submodules of finite index, then, $M^\vee$ has no non-zero finite $\Lambda$-submodules. Then, since the kernel of $\Phi$ is finite, it must be $0$. Since $M^\vee$ is torsion, it follows that $r=0$, and hence the target of $\Phi$ is a free $\Z_p$-module of rank $\lambda(M)$. Since $\op{cok}\Phi$ is finite and $\Phi$ is injective, it follows that $M^\vee$ is a free $\Z_p$-module of rank equal to $\lambda(M)$. In particular, we find that 
\[\op{dim}_{\F_p}M[p]=\lambda(M).\]
\par We specialize the above discussion to the $\Sigma$-imprimitive Selmer groups \[M_i:=\op{Sel}_{p^\infty}^{\Sigma} (E_i[p]/\Kant).\]
It follows from Proposition \ref{prop 3.3} that 
\[M_i[p]\simeq \op{Sel}^\Sigma(E_i[p]/\Kant).\] It thus follows that $M_1[p]\simeq M_2[p]$, for further details, cf. \cite[Lemma 6.12]{kunduraykida}. Take $\mu_p^\Sigma(E_i/\Kant)$ (resp. $\lambda_p^\Sigma(E_i/\Kant)$) to denote $\mu(M_i)$ (resp. $\lambda(M_i)$). It is shown that $M_i^\vee$ has no non-zero finite $\Lambda$-submodules, cf. \cite[Proposition 6.18]{kunduraykida}. It follows from the discussion above that 
\[\begin{split}& \op{Sel}_{p^\infty}^{\Sigma} (E_1/\Kant)\text{ is cotorsion with }\mu_p^\Sigma(E_1/\Kant)=0 \\
\Leftrightarrow & \op{Sel}_{p^\infty}^{\Sigma} (E_2/\Kant)\text{ is cotorsion with }\mu_p^\Sigma(E_2/\Kant)=0. \end{split}\]
Assume that the above equivalent conditions are satisfied. The equality $\dim_{\F_p}M_1[p]=\dim_{\F_p}M_2[p]$ implies that 
\[\lambda_p^\Sigma(E_1/\Kant)=\lambda_p^\Sigma(E_2/\Kant).\]
Next, we relate the imprimitive Iwasawa invariants $\mu_p^\Sigma(E_i/\Kant)$ and $\lambda_p^\Sigma(E_i/\Kant)$ to the primitive Iwasawa invariants $\mu_p(E_i/\Kant)$ and $\lambda_p(E_i/\Kant)$. There is a natural short exact sequence
\[0\rightarrow \op{Sel}_{p^\infty}(E_i/\Kant)\rightarrow \op{Sel}_{p^\infty}^{\Sigma}(E_i/\Kant)\rightarrow \bigoplus_{v\in \Sigma} \mathcal{H}_v(E_i/\Kant)\rightarrow 0,\]cf. \cite[Eqn. (6.1) and Lemma 3.4]{kunduraykida}. Proposition \ref{computing sigma_v} asserts that the $\mu$-invariant of $\cH_v(E_i/\Kant)$ is $0$ and $\sigma_{E_i}^{(v)}=s_vd_v(E_i)$. It follows that 
\[\mu_p^\Sigma(E_i/\Kant)=\mu_p(E_i/\Kant)\]
and 
\[\begin{split}
\lambda_p^\Sigma(E_i/\Kant)=& \corank_{\Z_p}\op{Sel}_{p^\infty}^{\Sigma}(E_i/\Kant) \\
= & \corank_{\Z_p}\op{Sel}_{p^\infty}(E_i/\Kant)+ \sum_{v\in \Sigma} \corank_{\Z_p}\left(\mathcal{H}_v(E_i/\Kant)\right) \\
= &\lambda_p(E_i/\Kant)+\sum_{v\in \Sigma} \sigma_{E_i}^{(v)}.
\end{split}\]
From the above relations, we obtain the result.
\end{proof}

\section{A general criterion}\label{s 4}
\par Let $p$ be an odd prime, $K=\Q(\sqrt{-D})$ be an imaginary quadratic field in which $p$ splits, and $\Kant$ denote the anticyclotomic $\Z_p$-extension of $K$. Given $n\in \Z_{\geq 1}$, we recall that $K_n$ is $n$-th layer in $\Kant$. Let $\p$ and $\p^*$ be the primes of $K$ that lie above $p$.

\begin{theorem}\label{first theorem section 3}
With respect to notation above, suppose that there exists an elliptic curve $E_{/K}$ such that all of the following conditions are satisfied
\begin{enumerate}
\item\label{c1 first theorem section 3} the elliptic curve $E$ has good ordinary reduction at the primes $\p$ and $\p^*$, 
\item\label{c2 first theorem section 3} the Selmer group $\op{Sel}_{p^\infty}(E/\Kant)$ is cotorsion over $\Lambda$  with $\mu_p(E/\Kant)=0$, 
\item\label{c3 first theorem section 3} the rank of the Mordell-Weil group $E(K)$ is positive, 
\item $\lambda_p(E/\Kant)=\op{rank} E(K)$.
\end{enumerate}
Then, for all $n\in \Z_{\geq 0}$, the Hilbert's tenth problem has a negative answer for $\cO_{K_n}$.
\end{theorem}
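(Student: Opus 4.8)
The plan is to deduce the statement from Shlapentokh's criterion (Theorem \ref{shlap 1}); the crux is to prove that the Mordell--Weil rank is stable in the anticyclotomic tower, i.e. that $\op{rank} E(K_n) = \op{rank} E(K)$ for every $n \in \Z_{\geq 0}$. Granting this, hypothesis \eqref{c3 first theorem section 3} gives $\op{rank} E(K_n) = \op{rank} E(K) > 0$, so Theorem \ref{shlap 1}, applied to $E$ over $K$ with the extension $K_n/K$, shows that $K_n/K$ is integrally diophantine. Since $K$ is imaginary quadratic, $K/\Q$ is integrally diophantine by the classical results recalled in the introduction (for instance by \cite{shapiro1989diophantine}, as $K/\Q$ is abelian), and transitivity of integrally diophantine extensions (\cite[Theorem 2.1.15]{shlapentokh2007hilbert}) yields that $K_n/\Q$ is integrally diophantine. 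In other words $\cO_{K_n}$ satisfies Conjecture \ref{conj denef lip}; since $\Z$ is then a diophantine subset of $\cO_{K_n}$ and Hilbert's tenth problem is unsolvable over $\Z$ by Matiyasevich's theorem, it is unsolvable over $\cO_{K_n}$.

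It remains to establish rank stability. The inequality $\op{rank} E(K) \leq \op{rank} E(K_n)$ is immediate from $E(K) \subseteq E(K_n)$, so I must prove the reverse, and the plan is to run through Selmer groups:
\begin{align*}
\op{rank} E(K_n) &\;\leq\; \corank_{\Zp}\op{Sel}_{p^\infty}(E/K_n) \;\leq\; \corank_{\Zp}\op{Sel}_{p^\infty}(E/\Kant)\\
&\;=\; \lambda_p(E/\Kant) \;=\; \op{rank} E(K).
\end{align*}
The first inequality follows from the $p^\infty$-descent exact sequence $0 \to E(K_n) \otimes_{\Z} \Q_p/\Z_p \to \op{Sel}_{p^\infty}(E/K_n) \to \Sh(E/K_n)[p^\infty] \to 0$ together with the identification $E(K_n)\otimes_\Z \Q_p/\Z_p \cong (\Q_p/\Z_p)^{\op{rank} E(K_n)}$ (the torsion of $E(K_n)$ contributes nothing, $\Q_p/\Z_p$ being divisible). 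The middle equality uses hypotheses \eqref{c1 first theorem section 3}--\eqref{c2 first theorem section 3}: since $\op{Sel}_{p^\infty}(E/\Kant)$ is $\Lambda$-cotorsion with $\mu_p(E/\Kant)=0$, its Pontryagin dual is a finitely generated $\Z_p$-module of $\Z_p$-rank equal to $\lambda_p(E/\Kant)$ by \eqref{lambda is equal to rank}. The final equality is the last hypothesis, $\lambda_p(E/\Kant) = \op{rank} E(K)$.

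The only step that requires genuine argument is the second inequality: a one-sided control statement. The natural restriction map $\op{Sel}_{p^\infty}(E/K_n) \to \op{Sel}_{p^\infty}(E/\Kant)$ has finite kernel, hence does not decrease $\Z_p$-corank, while its target has $\Z_p$-corank $\lambda_p(E/\Kant)$ by the previous paragraph. To bound the kernel, observe that it sits inside $H^1(\op{Gal}(\Kant/K_n), E(\Kant)[p^\infty])$ by the inflation--restriction sequence for $H^1(K_S/K_n, E[p^\infty]) \to H^1(K_S/\Kant, E[p^\infty])$. Since $\op{Gal}(\Kant/K_n) \cong \Z_p$ has cohomological dimension one, this $H^1$ is the group of $\op{Gal}(\Kant/K_n)$-coinvariants of $E(\Kant)[p^\infty]$; and the maximal divisible subgroup $D \subseteq E(\Kant)[p^\infty]$ has finite $\op{Gal}(\Kant/K_n)$-invariants, because $D^{\op{Gal}(\Kant/K_n)}$ embeds in the finite group $E(K_n)[p^\infty]$. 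Dualizing, a $\Z_p$-linear endomorphism of a divisible group of finite corank with finite kernel is surjective, so the $\op{Gal}(\Kant/K_n)$-coinvariants of $D$ vanish and those of $E(\Kant)[p^\infty]$ are finite, as needed. The point to emphasize is that only the \emph{finiteness of the kernel} is invoked, never a two-sided control theorem over $\Kant$; in particular the delicate behaviour of the local conditions at primes that are infinitely decomposed in $\Kant$ — the feature distinguishing the anticyclotomic setting from the cyclotomic one — never enters. The inequality $\corank_{\Zp}\op{Sel}_{p^\infty}(E/K_n) \leq \lambda_p(E/\Kant)$ obtained this way is exactly what the hypothesis $\lambda_p(E/\Kant) = \op{rank} E(K)$ is designed to convert into rank stability, and this mild control step is the only real obstacle.
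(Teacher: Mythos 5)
Your argument is correct and follows essentially the same route as the paper: corank of $\op{Sel}_{p^\infty}(E/\Kant)$ equals $\lambda_p(E/\Kant)$ since $\mu=0$, the restriction map $\op{Sel}_{p^\infty}(E/K_n)\to \op{Sel}_{p^\infty}(E/\Kant)$ has finite kernel, the descent sequence bounds $\op{rank}E(K_n)$, and then Shlapentokh's criterion plus transitivity of integrally diophantine extensions finishes the proof. The only difference is that you prove the finiteness of the kernel directly via inflation--restriction and the finiteness of $E(K_n)[p^\infty]$, where the paper simply cites Greenberg's control-type result \cite[Theorem 4.1]{greenberg2001introduction}; your self-contained argument is fine.
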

\begin{proof}
It follows from the structure theory of $\Lambda$-modules that condition \eqref{c2 first theorem section 3} above implies that $\op{Sel}_{p^\infty}(E/\Kant)^{\vee}$ is a finitely generated $\Z_p$-module and
\[\lambda_p(E/\Kant)=\op{rank}_{\Z_p}\left(\op{Sel}_{p^\infty}(E/\Kant)^{\vee}\right).\]
The kernel of the natural restriction map
\[\op{Sel}_{p^\infty}(E/K_n)\rightarrow \op{Sel}_{p^\infty}(E/\Kant)\] is finite (cf. \cite[Theorem 4.1]{greenberg2001introduction}), and therefore, 
\[\op{rank}_{\Z_p}\left( \op{Sel}_{p^\infty}(E/K_n)^\vee\right)\leq \lambda_p(E/\Kant).\]
Note that $\op{Sel}_{p^\infty}(E/K_n)$ fits into a short exact sequence 
\[0\rightarrow E(K_n)\otimes \Q_p/\Z_p\rightarrow \op{Sel}_{p^\infty}(E/K_n)\rightarrow \Sh(E/K_n)[p^\infty]\rightarrow 0,\] where $\Sh(E/K_n)[p^\infty]$ is the $p$-primary part of the Tate-Shafarevich group of $E/K_n$. Therefore, we deduce that for all $n\geq 0$,
\[\op{rank}E(K_n)\leq \op{rank}_{\Z_p}\left( \op{Sel}_{p^\infty}(E/K_n)^\vee\right)\leq \lambda_p(E/\Kant).\]
On the other hand, it is assumed that $\lambda_p(E/\Kant)=\op{rank}E(K)$. Therefore, we find that $\op{rank}E(K_n)=\op{rank}E(K)$. Since, $\op{rank}E(K)>0$, it follows from Theorem \ref{shlap 1} that $K_n/K$ is integrally diophantine. Since $K/\Q$ is integrally diophantine, it follows that for all $n$, $K_n/\Q$ is integrally diophantine. Therefore, for all $n$, Hilbert's tenth problem has a negative answer for $\cO_{K_n}$.
\end{proof}

In order to construct an elliptic curve $E$ that satisfies the above conditions, we begin with two elliptic curves $E_1$ and $E_2$ that are both defined over the rational numbers. We assume that $E_1$ and $E_2$ are $p$-congruent, i.e., the Galois modules $E_1[p]$ and $E_2[p]$ are isomorphic.
\begin{proposition}\label{prop 3.2}
Let $K$ be an imaginary quadratic field and $p$ be an odd prime number which splits into primes $\p$ and $\p^*$ in $\cO_K$. Assume that there exist elliptic curves $E_1$ and $E_2$ defined over $K$ such that 
\begin{enumerate}
\item both $E_1$ and $E_2$ have good ordinary reduction at $\p$ and $\p^*$,
\item $E_1[p]$ and $E_2[p]$ are isomorphic as modules over $\op{Gal}(\bar{K}/K)$,
\item $\op{rank} E_1(K)=0$ and $\op{rank}E_2(K)>0$, 
\item $E_1(K)[p]=E_2(K)[p]=0$, 
\item each prime $v\in \Sigma$ is finitely decomposed in the anticyclotomic extension $\Kant/K$. Equivalently, let $v\in \Sigma$ and $\ell$ be the prime number for which $v|\ell$. Then, $\ell$ splits in $K$ into a product of two primes $v$ and $v^*$.
\item\label{c6 of prop 3.2} the Selmer group $\op{Sel}_{p^\infty}(E_1/\Kant)$ is cotorsion over $\Lambda$ with $\mu_p(E_1/\Kant)=0$ and $\lambda_p(E_1/\Kant)=0$.
 
\end{enumerate}
Suppose that the following relationship holds
\begin{equation}\label{main equation for rank}\op{rank} E_2(K)=\sum_{v\in \Sigma} \left(\sigma_{E_1}^{(v)}-\sigma_{E_2}^{(v)}\right),\end{equation}
where we recall that $\sigma_{E}^{(v)}$ is the $\Z_p$-corank of $\mathcal{H}_v(E/\Kant)$,

Then, for all $n\in \Z_{\geq 0}$, Hilbert's tenth problem has a negative answer for $\cO_{K_n}$.
\end{proposition}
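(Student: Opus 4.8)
The plan is to derive this proposition as an essentially formal consequence of Theorem~\ref{thm 3.5} and Theorem~\ref{first theorem section 3}, applied with $E=E_2$.

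First I would check that Theorem~\ref{thm 3.5} is applicable: its four hypotheses are exactly hypotheses (1), (2), (4), and (5) of the present proposition, namely good ordinary reduction at $\p$ and $\p^*$, the $p$-congruence of $E_1$ and $E_2$, the vanishing $E_1(K)[p]=E_2(K)[p]=0$, and Assumption~\ref{main ass section 3} that every $v\in\Sigma$ is finitely decomposed in $\Kant$. Granting this, the first assertion of Theorem~\ref{thm 3.5} transfers cotorsionness: since hypothesis~\eqref{c6 of prop 3.2} says $\op{Sel}_{p^\infty}(E_1/\Kant)$ is cotorsion over $\Lambda$ with $\mu_p(E_1/\Kant)=0$, it follows that $\op{Sel}_{p^\infty}(E_2/\Kant)$ is cotorsion over $\Lambda$ with $\mu_p(E_2/\Kant)=0$ as well. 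This is condition~\eqref{c2 first theorem section 3} of Theorem~\ref{first theorem section 3} for $E=E_2$, while conditions~\eqref{c1 first theorem section 3} and~\eqref{c3 first theorem section 3} follow immediately from hypotheses (1) and (3) of the proposition.

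Next I would pin down the $\lambda$-invariant of $E_2$. Since both Selmer groups are now known to be cotorsion with vanishing $\mu$-invariant, the second assertion of Theorem~\ref{thm 3.5} gives
\[\lambda_p(E_2/\Kant)-\lambda_p(E_1/\Kant)=\sum_{v\in\Sigma}\left(\sigma_{E_1}^{(v)}-\sigma_{E_2}^{(v)}\right).\]
Using $\lambda_p(E_1/\Kant)=0$ from hypothesis~\eqref{c6 of prop 3.2}, and the fact that the right-hand side equals $\op{rank}E_2(K)$ by the assumed identity~\eqref{main equation for rank}, I conclude $\lambda_p(E_2/\Kant)=\op{rank}E_2(K)$, which is precisely the remaining hypothesis of Theorem~\ref{first theorem section 3}. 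Applying that theorem with $E=E_2$ then yields a negative answer to Hilbert's tenth problem for $\cO_{K_n}$ for every $n\in\Z_{\geq 0}$.

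I do not expect any genuine obstacle here: all of the substantive content has already been absorbed into the results cited above. In particular, the extension of the Greenberg--Vatsal and Kundu--Ray congruence comparison to the case $p=3$ — carried out via Lemma~\ref{iota map injective} and Proposition~\ref{prop 3.3} — is the hard part, and it lives inside Theorem~\ref{thm 3.5}; Shlapentokh's criterion is packaged inside Theorem~\ref{first theorem section 3}. The only point that even requires attention is matching the corank data, i.e. that each $\sigma_{E_i}^{(v)}$ appearing in~\eqref{main equation for rank} is the $\Z_p$-corank of $\mathcal{H}_v(E_i/\Kant)$ computed in Proposition~\ref{computing sigma_v}, and this is immediate. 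So the proposition is a clean corollary of the machinery already developed.
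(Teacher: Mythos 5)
Your proposal is correct and follows exactly the same route as the paper's proof: apply Theorem~\ref{thm 3.5} (with hypotheses (1), (2), (4), (5) and hypothesis~\eqref{c6 of prop 3.2}) to transfer cotorsionness and $\mu=0$ to $E_2$ and to compute $\lambda_p(E_2/\Kant)=\op{rank}E_2(K)$ via~\eqref{main equation for rank}, then conclude with Theorem~\ref{first theorem section 3} applied to $E=E_2$. No gaps.
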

\begin{proof}
Let $E$ be the elliptic curve $E_2$. We show that the conditions of Theorem \ref{first theorem section 3} is satisfied. The conditions \eqref{c1 first theorem section 3} and \eqref{c3 first theorem section 3} of the Theorem are satisfied by assumption. The condition \eqref{c2 first theorem section 3} of the Theorem requires that the Selmer group $\op{Sel}_{p^\infty}(E/\Kant)$ is cotorsion over $\Lambda$  with $\mu_p(E/\Kant)=0$. We note that it is assumed that $\op{Sel}_{p^\infty}(E_1/\Kant)$ is cotorsion over $\Lambda$ with $\mu_p(E_1/\Kant)=0$. Therefore, the condition \eqref{c2 first theorem section 3} follows from Theorem \ref{thm 3.5}. Furthermore, it follows from Theorem \ref{thm 3.5} that 
\[\lambda_p(E/\Kant)-\lambda_p(E_1/\Kant)=\sum_{v\in \Sigma} \left(\sigma_{E_1}^{(v)}-\sigma_{E}^{(v)}\right).\]
Since it is assumed that 
\[\lambda_p(E_1/\Kant)=0\]
and that 
\[\op{rank} E(K)=\sum_{v\in \Sigma} \left(\sigma_{E_1}^{(v)}-\sigma_{E}^{(v)}\right),\]
we deduce that 
\[\op{rank}E(K)=\lambda_p(E/\Kant).\] The assumptions of Theorem \ref{first theorem section 3} are satisfied and the result follows.
\end{proof}

The condition \eqref{c6 of prop 3.2} of Proposition \ref{prop 3.2} is the most difficult to verify and warrants further explanation. We derive explicit conditions on the elliptic curve $E_1$ for this condition to be satisfied. These conditions are obtained by studying the Euler characteristic of the Selmer group $\op{Sel}_{p^\infty}(E/\Kant)$, where $E=E_1$. 

 \par Let $M$ be a cofinitely generated cotorsion $\Lambda$-module. Since the $p$-cohomological dimension of $\Lambda$ is $0$, it follows that $H^i(\Gamma, M)=0$ for all values $i\geq 2$. Note that $H^1(\Gamma, M)$ is identified with $M_{\Gamma}$. The module $H^0(\Gamma, M)$ is finite if and only if $H^1(\Gamma, M)$ is finite (cf. \cite[Lemma 2.1]{kunduraystats}). The Euler characteristic of $M$ is then defined as follows
 \[\chi(\Gamma, M):=\frac{\# H^0(\Gamma, M)}{\# H^1(\Gamma, M)}. \]
 Let $f_M(T)=\sum_i c_iT^i$ be a generator of the characteristic ideal of $M^\vee$. Given $a,b\in \Z_p$, we write $a\sim b$ if $a=ub$ for some unit $u\in \Z_p^\times$. 

 \begin{lemma}\label{tiny lemma}
 Let $M$ be a cofinitely generated cotorsion $\Lambda$-module such that the cohomology groups $H^0(\Gamma, M)$ and $H^1(\Gamma, M)$ are finite. Then, $\chi(\Gamma, M)\sim c_0$.
 \end{lemma}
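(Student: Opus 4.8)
The plan is to transport the problem to Pontryagin duals, where $\chi(\Gamma, M)$ becomes a ``$T$-Euler characteristic'' of the finitely generated torsion $\Lambda$-module $N := M^\vee$, and then to evaluate that invariant on the elementary pieces supplied by the structure theorem \eqref{decomposition}.

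First I would fix $T = \gamma - 1$ under $\Lambda \cong \Zp\llbracket T \rrbracket$ and write the two cohomology groups down explicitly: $H^0(\Gamma, M) = M^\Gamma = M[\gamma - 1]$ and, as recalled above, $H^1(\Gamma, M) = M_\Gamma = M/(\gamma - 1)M$. Since Pontryagin duality is exact and contravariant, dualizing $0 \to M[\gamma-1] \to M \xrightarrow{\gamma-1} M$ and $M \xrightarrow{\gamma-1} M \to M/(\gamma-1)M \to 0$ identifies $H^0(\Gamma,M)^\vee$ with $N/TN$ and $H^1(\Gamma,M)^\vee$ with $N[T]$ (this replaces $\gamma$ by $\gamma^{-1}$, which affects none of these groups since $\gamma^{-1}-1$ and $\gamma-1$ generate the same ideal of $\Lambda$). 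A finite abelian group and its dual have equal order, so the hypothesis is exactly that $N/TN$ and $N[T]$ are finite, and then
\[\chi(\Gamma, M) = \frac{\#(N/TN)}{\#(N[T])}.\]

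Next I would introduce, for a finitely generated torsion $\Lambda$-module $P$ with $P/TP$ and $P[T]$ both finite, the quantity $q(P) := \#(P/TP)/\#(P[T])$, and verify two formal properties. Applying the snake lemma to multiplication by $T$ on a short exact sequence $0 \to P' \to P \to P'' \to 0$ gives the six-term exact sequence $0 \to P'[T] \to P[T] \to P''[T] \to P'/TP' \to P/TP \to P''/TP'' \to 0$; hence whenever two of $q(P'),q(P),q(P'')$ are defined so is the third, and $q(P)=q(P')\,q(P'')$. And on a finite module $F$ any endomorphism has kernel and cokernel of equal order, so $q(F)=1$; together these make $q$ a pseudo-isomorphism invariant. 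I would then invoke \eqref{decomposition}: as $M$ is cotorsion there is no free part, so $N$ is pseudo-isomorphic to $\bigoplus_i \Lambda/(p^{n_i}) \oplus \bigoplus_j \Lambda/(f_j(T)^{m_j})$ with the $f_j$ irreducible distinguished, and $f_M$ equals $\prod_i p^{n_i}\prod_j f_j^{m_j}$ up to a unit of $\Lambda$, so $c_0=f_M(0)\sim \prod_i p^{n_i}\prod_j f_j(0)^{m_j}$. On the summands I would compute $q(\Lambda/(p^n)) = \#(\Zp/p^n\Zp)=p^n$ (the $T$-torsion vanishes because $T$ is a prime of the unique factorization domain $\Lambda$ not dividing $p^n$) and $q(\Lambda/(f^m)) = \#\bigl(\Zp/f(0)^m\Zp\bigr)$ for $f$ irreducible distinguished with $f \neq T$ (again no $T$-torsion, since $\gcd(f^m,T)=1$), these being finite precisely when $f(0)\neq 0$. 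Multiplying over the summands yields
\[\chi(\Gamma, M) = q(N) = \Bigl|\,\prod_i p^{n_i}\prod_j f_j(0)^{m_j}\,\Bigr|_p^{-1} = |f_M(0)|_p^{-1} = |c_0|_p^{-1},\]
which is a power of $p$; writing $c_0 = u\,p^{v_p(c_0)}$ with $u\in\Zp^\times$ turns this into the claimed relation $\chi(\Gamma,M)\sim c_0$ in $\Zp$.

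The only place the hypothesis genuinely intervenes, and the one step needing care, is finiteness: for $q(N)$ to be defined one needs $f_j\neq T$ for all $j$ — equivalently $c_0=f_M(0)\neq 0$ — and one must check along the reduction that every $T$-Euler characteristic that appears (for $N$, for its elementary summands, and for the finite kernels and cokernels of the pseudo-isomorphism) is indeed finite, so that the multiplicativity of $q$ can be applied. Everything else is routine bookkeeping with the snake lemma and Pontryagin duality, so I do not expect a substantive obstacle.
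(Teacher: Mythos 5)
Your argument is correct and complete. Note, however, that the paper does not actually prove this lemma: its ``proof'' is a citation to \cite[Lemma 3.4]{hatley2021statistics} (where the statement is established by essentially the computation you give). So what you have written is a self-contained version of the standard argument behind that citation: identify $H^0(\Gamma,M)$ and $H^1(\Gamma,M)$ with $M[\gamma-1]$ and $M_\Gamma$, dualize to get $N/TN$ and $N[T]$ for $N=M^\vee$ (your remark that replacing $\gamma$ by $\gamma^{-1}$ is harmless because $(\gamma^{-1}-1)=(\gamma-1)$ as ideals is exactly the point one must make), prove multiplicativity of $q(P)=\#(P/TP)/\#(P[T])$ via the snake lemma together with $q(F)=1$ for finite $F$, reduce to the elementary module by pseudo-isomorphism (the cotorsion hypothesis killing the free part), and compute $q(\Lambda/(p^{n}))=p^{n}$ and $q(\Lambda/(f^{m}))=|f(0)|_p^{-m}$, so that $\chi(\Gamma,M)=|f_M(0)|_p^{-1}\sim c_0$; you also correctly isolate where the finiteness hypothesis enters, namely in forcing $f_j\neq T$ (equivalently $c_0\neq 0$) and in making the two-out-of-three multiplicativity applicable along the reduction. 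The only blemish is your parenthetical justification that $\Lambda/(p^{n})$ and $\Lambda/(f^{m})$ have no $T$-torsion: ``$T$ is a prime of the UFD $\Lambda$ not dividing $p^{n}$'' is not by itself a proof, since $\Lambda/(p^{n})$ is not a domain for $n>1$. The clean statement is that the zero divisors of $\Lambda/(\pi^{n})$, for $\pi$ a prime element, lie in the image of $(\pi)$ (the unique associated prime of $(\pi^{n})$), and $T\notin(p)$, $T\notin(f)$ for $f\neq T$ distinguished irreducible; alternatively, for $\Lambda/(p^{n})\cong(\Zp/p^{n}\Zp)\llbracket T\rrbracket$ one sees directly on coefficients that multiplication by $T$ is injective. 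With that justification tightened, your proof is a valid replacement for the paper's external citation.
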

 \begin{proof}
 This is a well known result, we refer for instance to \cite[Lemma 3.4]{hatley2021statistics}.
 \end{proof}

\begin{proposition}\label{prop 4.4}
Let $M$ be a cofinitely generated and cotorsion $\Lambda$-module for which the cohomology groups $H^i(\Gamma, M)$ are finite.
Then, the following are equivalent
\begin{enumerate}
 \item\label{261} $\chi(\Gamma, M)=1$,
 \item\label{262} $f_{M}(T)=1$,
 \item\label{263} $\mu(M)=0$ and $\lambda(M)=0$,
 \item\label{264} $M$ has finite cardinality.
\end{enumerate} 
\end{proposition}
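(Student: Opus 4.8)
The plan is to prove the cycle of equivalences $(2)\Leftrightarrow(3)$, $(3)\Leftrightarrow(4)$, and $(3)\Leftrightarrow(1)$, all of which follow from the structure theorem \eqref{decomposition} together with Lemma \ref{tiny lemma}. Since $M$ is cotorsion over $\Lambda$, the free part in \eqref{decomposition} vanishes, i.e.\ $r=0$, and a generator of the characteristic ideal of $M^\vee$ is, up to a unit of $\Lambda$,
\[
f_M(T)\;=\;p^{\mu(M)}\,g(T),\qquad g(T):=\prod_{j=1}^{t}f_j(T)^{m_j},
\]
where $g$ is a distinguished polynomial of degree $\lambda(M)=\sum_j m_j\,\op{deg}f_j$ (empty product $g=1$ when $t=0$).

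First I would dispose of $(2)\Leftrightarrow(3)$: the units of $\Lambda\cong\Zp\llbracket T\rrbracket$ are precisely the power series with constant term in $\Zp^\times$, so by $p$-adic Weierstrass preparation the element $p^{\mu(M)}g(T)$ is a unit if and only if $\mu(M)=0$ and $g=1$, that is $\lambda(M)=0$; hence $f_M(T)=1$ (the reduced generator of the unit ideal) if and only if $\mu(M)=\lambda(M)=0$. For $(3)\Leftrightarrow(4)$ I would argue that $M$ is finite $\iff$ $M^\vee$ is finite $\iff$ $M^\vee$ is pseudo-null over the $2$-dimensional regular local ring $\Lambda$ (a finitely generated module supported only at the maximal ideal $\mathfrak{m}=(p,T)$ has finite length, since $\Lambda/\mathfrak{m}=\F_p$) $\iff$ $r=s=t=0$ in \eqref{decomposition} $\iff$ $\mu(M)=\lambda(M)=0$, where the last step uses $r=0$ from cotorsionness and the fact that each nontrivial term $\Lambda/(p^{n_i})$ ($n_i\geq1$) contributes to $\mu$ and each $f_j$ (of positive degree) contributes to $\lambda$.

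It remains to show $(1)\Leftrightarrow(3)$. By Lemma \ref{tiny lemma}, $\chi(\Gamma,M)\sim c_0$ where $c_0=f_M(0)=p^{\mu(M)}g(0)$ is the constant term of $f_M$; the standing hypothesis that $H^0(\Gamma,M)$ and $H^1(\Gamma,M)$ are finite forces $c_0\neq 0$. Since $\chi(\Gamma,M)$ is a quotient of orders of finite $p$-groups, $\chi(\Gamma,M)=1$ if and only if $v_p(c_0)=0$, i.e.\ $c_0\in\Zp^\times$. If $\mu(M)=\lambda(M)=0$ then $g=1$ and $c_0=1$, so $\chi(\Gamma,M)=1$. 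Conversely, $c_0\in\Zp^\times$ forces $\mu(M)=0$ and then $g(0)=c_0\in\Zp^\times$; but a distinguished polynomial of degree $\lambda(M)\geq1$ has its constant coefficient in $p\Zp$ (it is a non-leading coefficient), so necessarily $\lambda(M)=0$. This closes the cycle and proves the proposition.

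The routine algebra here is not the real issue; the point to be careful about is the degenerate case. One must genuinely invoke the hypothesis that the $H^i(\Gamma,M)$ are finite — equivalently $T\nmid f_M(T)$, i.e.\ $c_0\neq0$ — for Lemma \ref{tiny lemma} to pin $\chi(\Gamma,M)$ down up to a unit and for the comparison $\chi(\Gamma,M)=1\Leftrightarrow c_0\in\Zp^\times$ to be meaningful; without it the equivalence $(1)\Leftrightarrow(3)$ can break.
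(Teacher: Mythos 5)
Your argument is correct and follows essentially the same route as the paper, which deduces the proposition directly from Lemma \ref{tiny lemma} (via the cited reference): the structure theorem with $r=0$ gives $f_M(T)\sim p^{\mu(M)}g(T)$ with $g$ distinguished of degree $\lambda(M)$, and comparing $\chi(\Gamma,M)\sim c_0=f_M(0)$ with the unit criterion in $\Zp\llbracket T\rrbracket$ yields the cycle of equivalences exactly as you present it. Your closing remark about the finiteness hypothesis on the $H^i(\Gamma,M)$ (equivalently $f_M(0)\neq 0$) being genuinely needed for the comparison with $\chi$ is a correct and appropriate observation, consistent with the standing hypotheses of the proposition.
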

\begin{proof}
The above result is a direct consequence of Lemma \ref{tiny lemma}. We refer to \cite[Proposition 3.6]{hatley2021statistics} for the proof.
\end{proof}

\begin{theorem}\label{thm 4.5}
Let $E$ be an elliptic curve defined over an imaginary quadratic field $K$. Set $\Gamma$ to denote the Galois group $\op{Gal}(\Kant/K)$. Assume that the following conditions are satisfied
\begin{enumerate}
\item $E(K)[p]=0$,
\item $E$ has good ordinary reduction at each prime that lies above $p$.
\item $\op{rank}E(K)=0$,
 \item $\Sh(E/K)[p^{\infty}]$ is finite.
\end{enumerate} 
Then, the following assertions hold.
\begin{enumerate}
\item the Selmer group $M:=\op{Sel}_{p^\infty}(E/\Kant)$ is a cotorsion module over $\Lambda$, 
\item the cohomology groups $H^0(\Gamma, M)$ and $H^1(\Gamma, M)$ are finite, 
\item the Euler characteristic is given by the following formula
\[\chi(\Gamma, E[p^{\infty}])\sim \# \Sh(E/K)[p^{\infty}]\times \left(\prod_{\p|p}\# E(\F_\p)\right)^2\times \prod_{v\in \Omega_K} c_v^{(p)}(E).\]

\end{enumerate}

\end{theorem}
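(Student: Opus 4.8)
The plan is to dispatch the three assertions in turn. For (1): since $\op{rank}E(K)=0$ we have $E(K)\otimes_{\Z}\Q_p/\Z_p=0$, so the descent exact sequence $0\to E(K)\otimes\Q_p/\Z_p\to \op{Sel}_{p^\infty}(E/K)\to \Sh(E/K)[p^\infty]\to 0$ identifies $\op{Sel}_{p^\infty}(E/K)$ with the finite group $\Sh(E/K)[p^\infty]$; combined with the good ordinary reduction hypothesis, Proposition \ref{prop 2.1} then gives that $M:=\op{Sel}_{p^\infty}(E/\Kant)$ is cotorsion over $\Lambda$. For (2): I would invoke Greenberg's control theorem (cf.\ \cite[Theorem 4.1]{greenberg2001introduction}, already used in the proof of Theorem \ref{first theorem section 3}), which shows the restriction map $\op{Sel}_{p^\infty}(E/K)\to M^{\Gamma}$ has finite kernel and cokernel; since the source is finite, $H^0(\Gamma,M)=M^{\Gamma}$ is finite, and \cite[Lemma 2.1]{kunduraystats} then gives finiteness of $H^1(\Gamma,M)=M_{\Gamma}$ as well (this finiteness is in any case subsumed in Van Order's results). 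In particular $\chi(\Gamma,M)$ is defined and Lemma \ref{tiny lemma} applies.

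The content is in assertion (3), and here the approach is to apply Van Order's formula for the $\Gamma$-Euler characteristic of the anticyclotomic Selmer group \cite{van15}, then simplify its local factors under our hypotheses. Starting from the Greenberg-local description \eqref{greenberg selmer equation} of $\op{Sel}_{p^\infty}(E/\Kant)$, cotorsionness makes the global-to-local defining map surjective, so multiplicativity of Euler characteristics along the defining exact sequence writes $\chi(\Gamma,M)$ as a global contribution times a product of local contributions over $v\in S$. The global contribution is $\#\op{Sel}_{p^\infty}(E/K)\sim \#\Sh(E/K)[p^\infty]$: the hypothesis $E(K)[p]=0$ kills the torsion corrections, and the global Euler--Poincar\'e formula (with $K$ having no real place and $p$ odd) collapses the remaining global factor to a $p$-adic unit. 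For the local contributions: a prime $v\mid p$ of good ordinary reduction contributes $(\#E(\F_v))^{2}$ up to a $p$-adic unit, so $\p$ and $\p^*$ together give $\big(\prod_{\p\mid p}\#E(\F_\p)\big)^{2}$; a finitely decomposed bad prime $v\nmid p$, i.e.\ $v\in\Omega_K$ with bad reduction, contributes $c_v^{(p)}(E)$, while a finitely decomposed good prime contributes $c_v^{(p)}(E)=1$; an infinitely decomposed prime $v\nmid p$ has $\cH_v(E/\Kant)=0$ and contributes nothing; and the complex place of $K$ contributes trivially. Assembling these yields
\[\chi(\Gamma,M)\sim \#\Sh(E/K)[p^\infty]\times\Big(\prod_{\p\mid p}\#E(\F_\p)\Big)^{2}\times\prod_{v\in\Omega_K}c_v^{(p)}(E).\]

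The main obstacle is entirely within assertion (3): identifying the precise local Euler characteristic at the primes above $p$ as the square $(\#E(\F_\p))^{2}$, and confirming that among primes $v\nmid p$ only the finitely decomposed bad ones contribute --- in particular that $\cH_v(E/\Kant)=0$ for infinitely decomposed $v$. These are the computations already packaged in \cite{van15}, so the real task is to check that that reference applies verbatim in our situation (where $p$ splits in $K$ and the two primes above $p$ ramify in $\Kant$), and that its hypotheses --- notably $\Lambda$-cotorsionness of the Selmer group --- are met, the latter being supplied by assertion (1).
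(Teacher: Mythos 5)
Your proposal follows essentially the same route as the paper's proof: cotorsionness of $M$ via Proposition \ref{prop 2.1} (using that $\op{rank}E(K)=0$ together with finiteness of $\Sh(E/K)[p^\infty]$ makes $\op{Sel}_{p^\infty}(E/K)$ finite), finiteness of $H^0(\Gamma,M)$ and $H^1(\Gamma,M)$ by a control-theorem type argument (the paper instead cites \cite[Lemma 3.3]{hatley2021statistics}), and the Euler characteristic formula imported from Van Order \cite{van15}. The only points worth noting are that the paper explicitly invokes the \emph{corrected} form of Van Order's formula (\cite[Theorem 4.4]{hatley2021statistics}), which is precisely the version you write down (Tamagawa factors only over $\Omega_K$ and the square at the primes above $p$), and that your aside that $\cH_v(E/\Kant)=0$ for infinitely decomposed $v\nmid p$ is not literally correct (these local groups are finite but need not vanish; it is their $\Gamma$-Euler characteristic contribution that is trivial), though this is immaterial since you ultimately defer to the cited formula.
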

\begin{proof}
That $M$ is a cotorsion over $\Lambda$ follows from Proposition \ref{prop 2.1}. That the cohomology group $H^i(\Gamma, M)$ are finite, follows from \cite[Lemma 3.3]{hatley2021statistics}. The formula for the Euler characteristic is due to J. Order \cite{van15}. There is a correction to this formula, for which we refer to \cite[Theorem 4.4]{hatley2021statistics}. 
\end{proof}

\begin{corollary}\label{cor 4.6}
Let $E$ be an elliptic curve satisfying the above conditions of Theorem \ref{thm 4.5}. Furthermore, assume that the following additional conditions are satisfied
\begin{enumerate}
\item $\Sh(E/K)[p^\infty]=0$, 
\item $E(\F_{\p})[p]=0$, 
\item $c_v^{(p)}(E)=1$ for all $v\in \Omega_K$. 
\end{enumerate}
Then, the Selmer group $\op{Sel}_{p^\infty}(E/\Kant)$ is a cotorsion module over $\Lambda$, with $\mu_p(E/\Kant)=\lambda_p(E/\Kant)=0$. 
\end{corollary}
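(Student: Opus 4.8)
The plan is to combine the Euler characteristic formula of Theorem~\ref{thm 4.5} with the numerical criterion of Proposition~\ref{prop 4.4}. First I would check that $E$ satisfies the hypotheses of Theorem~\ref{thm 4.5}: we are given $E(K)[p]=0$, good ordinary reduction at the primes above $p$, $\op{rank}E(K)=0$, and $\Sh(E/K)[p^\infty]=0$, which is in particular finite. Writing $M:=\op{Sel}_{p^\infty}(E/\Kant)$ and $\Gamma:=\op{Gal}(\Kant/K)$, Theorem~\ref{thm 4.5} then yields that $M$ is cotorsion over $\Lambda$, that $H^0(\Gamma,M)$ and $H^1(\Gamma,M)$ are finite, and that
\[
\chi(\Gamma,M)\ \sim\ \#\Sh(E/K)[p^\infty]\times\left(\prod_{\mathfrak{q}\mid p}\#E(\F_{\mathfrak{q}})\right)^{2}\times\prod_{v\in\Omega_K}c_v^{(p)}(E).
\]

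Next I would evaluate the right-hand side under the three extra hypotheses of the corollary. The assumption $\Sh(E/K)[p^\infty]=0$ makes the first factor equal to $1$. The assumption $E(\F_\p)[p]=0$ says $p\nmid\#E(\F_\p)$; together with the corresponding statement at $\p^*$ (which is automatic when $E$ is a base change of a rational elliptic curve, as in our application, since both residue fields are then $\F_p$ and the two reductions coincide) this shows that $\prod_{\mathfrak{q}\mid p}\#E(\F_{\mathfrak{q}})$ is a $p$-adic unit, hence so is its square. Finally $c_v^{(p)}(E)=1$ for every $v\in\Omega_K$ kills the last factor. Thus the entire right-hand side is a $p$-adic unit; since $M$ is $p$-primary and $H^0(\Gamma,M),H^1(\Gamma,M)$ are finite $p$-groups, $\chi(\Gamma,M)=\#H^0(\Gamma,M)/\#H^1(\Gamma,M)$ is a power of $p$, so it must equal $1$.

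To conclude, I would apply Proposition~\ref{prop 4.4} to $M$, which is cofinitely generated and cotorsion over $\Lambda$ with all $H^i(\Gamma,M)$ finite: the equivalence there between $\chi(\Gamma,M)=1$ and $\mu(M)=\lambda(M)=0$ gives $\mu_p(E/\Kant)=\lambda_p(E/\Kant)=0$, which is exactly the assertion of the corollary (together with the cotorsionness already recorded in Theorem~\ref{thm 4.5}).

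I do not expect a genuine obstacle here: the real content sits in Theorem~\ref{thm 4.5} (the Van Order Euler characteristic formula, with the correction from \cite{hatley2021statistics}) and in Proposition~\ref{prop 4.4}, and the corollary is a bookkeeping check that every factor of the Euler characteristic is a $p$-adic unit. The one point that deserves a word of care is that the product $\prod_{\mathfrak{q}\mid p}$ runs over \emph{both} primes above $p$, whereas hypothesis (2) is phrased only at $\p$; this is harmless in the intended application (base change from $\Q$), but in full generality hypothesis (2) should be read as $p\nmid \#E(\F_\p)\cdot\#E(\F_{\p^*})$.
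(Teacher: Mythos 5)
Your proof is correct and is essentially the paper's own argument: Theorem~\ref{thm 4.5} gives the Euler characteristic formula, the additional hypotheses make each factor a $p$-adic unit so that $\chi(\Gamma,M)=1$, and Proposition~\ref{prop 4.4} converts this into $\mu_p(E/\Kant)=\lambda_p(E/\Kant)=0$. Your closing caveat about the factor at $\p^*$ is a fair observation about the statement itself --- the hypothesis is phrased only at $\p$ while the product runs over both primes above $p$, which is harmless for base changes from $\Q$ as in the paper's application --- but it does not alter the argument, which matches the paper's proof.
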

\begin{proof}
It follows from Theorem \ref{thm 4.5} that $\chi\left(\Gamma, \op{Sel}_{p^\infty}(E/\Kant)\right)=1$. Thus the result follows from Proposition \ref{prop 4.4}.
\end{proof}

\begin{theorem}\label{section 3 main thm}
Suppose that $E_1$ and $E_2$ satisfy the conditions (1)-(5) of Proposition \ref{prop 3.2}, and that 
\begin{enumerate}
\item $\Sh(E_1/K)[p^\infty]=0$, 
\item $E_1(\F_{\p})[p]=0$, 
\item $c_v^{(p)}(E_1)=1$ for all $v\in \Omega_K$,
\item \begin{equation}\label{main equation for rank}\op{rank} E_2(K)=\sum_{v\in \Sigma} \left(\sigma_{E_1}^{(v)}-\sigma_{E_2}^{(v)}\right),\end{equation}
\end{enumerate}
Then, for all $n\in \Z_{\geq 0}$, the Hilbert's tenth problem has a negative answer for the ring of integers of $K_n$.
\end{theorem}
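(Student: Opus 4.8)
The plan is to deduce this theorem by feeding the output of Corollary \ref{cor 4.6} into Proposition \ref{prop 3.2}. First I would observe that all of the hypotheses of Theorem \ref{thm 4.5} hold for the curve $E=E_1$: the vanishing $E_1(K)[p]=0$ is part of condition (4) of Proposition \ref{prop 3.2}; the good ordinary reduction at the primes above $p$ is condition (1) of Proposition \ref{prop 3.2}; the hypothesis $\op{rank} E_1(K)=0$ is condition (3) of Proposition \ref{prop 3.2}; and the finiteness of $\Sh(E_1/K)[p^\infty]$ is implied (indeed strengthened to vanishing) by hypothesis (1) of the present theorem. Consequently Theorem \ref{thm 4.5} applies to $E_1$.

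Next I would invoke Corollary \ref{cor 4.6} with $E=E_1$. Its three additional hypotheses, namely $\Sh(E_1/K)[p^\infty]=0$, $E_1(\F_{\p})[p]=0$, and $c_v^{(p)}(E_1)=1$ for all $v\in\Omega_K$, are precisely hypotheses (1), (2), (3) of the present theorem. Therefore Corollary \ref{cor 4.6} yields that $\op{Sel}_{p^\infty}(E_1/\Kant)$ is cotorsion over $\Lambda$ with $\mu_p(E_1/\Kant)=\lambda_p(E_1/\Kant)=0$. This is exactly condition \eqref{c6 of prop 3.2} of Proposition \ref{prop 3.2}.

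At this point all six numbered hypotheses of Proposition \ref{prop 3.2} have been verified: conditions (1)--(5) are assumed directly in the statement of the present theorem, and condition (6) has just been established via Corollary \ref{cor 4.6}. Moreover, the displayed relation
\[\op{rank} E_2(K)=\sum_{v\in \Sigma} \left(\sigma_{E_1}^{(v)}-\sigma_{E_2}^{(v)}\right)\]
required by Proposition \ref{prop 3.2} is precisely hypothesis (4) of the present theorem. Applying Proposition \ref{prop 3.2} then gives that Hilbert's tenth problem has a negative answer for $\cO_{K_n}$ for all $n\in\Z_{\geq 0}$, which is the desired conclusion.

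The proof is essentially a matter of matching hypotheses across the two earlier results, so there is no serious obstacle; the only point requiring a little care is the bookkeeping confirming that the finiteness of $\Sh(E_1/K)[p^\infty]$ needed to enter Theorem \ref{thm 4.5} is supplied (a fortiori) by the vanishing assumption, and that the Euler-characteristic formula of Theorem \ref{thm 4.5}, together with the normalizations in hypotheses (1)--(3), forces $\chi(\Gamma,\op{Sel}_{p^\infty}(E_1/\Kant))=1$ so that Proposition \ref{prop 4.4} can conclude $\mu_p=\lambda_p=0$ — but this is exactly the content of Corollary \ref{cor 4.6}, which may be cited as a black box.
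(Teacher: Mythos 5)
Your proposal is correct and follows exactly the paper's own argument: verify condition \eqref{c6 of prop 3.2} of Proposition \ref{prop 3.2} via Corollary \ref{cor 4.6} (whose hypotheses are supplied by conditions (1)--(3) of the theorem together with the already-assumed hypotheses of Theorem \ref{thm 4.5}), and then apply Proposition \ref{prop 3.2}. The extra bookkeeping you include, checking that the finiteness of $\Sh(E_1/K)[p^\infty]$ needed for Theorem \ref{thm 4.5} follows a fortiori from its vanishing, is a sound and slightly more explicit rendering of the same argument.
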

\begin{proof}
The result is deduced from Proposition \ref{prop 3.2}. It follows from Corollary \ref{cor 4.6} that the condition \eqref{c6 of prop 3.2} of this Proposition is satisfied. We assume that the conditions (1)-(5) of Proposition \ref{prop 3.2} are satisfied. Hence, the result follows.
\end{proof}
\begin{proof}[Proof of Theorem \ref{main theorem}]
Let $E_1$ and $E_2$ be elliptic curves which satisfy the conditions of the Theorem \ref{main theorem}. Note that by Proposition \ref{computing sigma_v}, $\sigma_{E_i}^{(v)}=s_vd_v(E_i)$. The conditions of Theorem \ref{section 3 main thm} are satisfied, and the result follows.
\end{proof}
\section{An explicit example}\label{s 5}
\par In this section, we construct an explicit congruence between two elliptic curves $E_1$ and $E_2$ defined over $\Q$ for which the conditions of Theorem \ref{main theorem} are satisfied. It suffices to check that the conditions of Theorem \ref{section 3 main thm} are satisfied. We shall take $K=\Q(\sqrt{-5})$ and $p=3$ and show that the Hilbert's tenth problem has a negative answer in $\cO_{K_n}$ for all $n\geq 0$, in the anticyclotomic $\Z_3$-extension of $K$. This will thus prove Theorem \ref{theorem main aux}. Let $E_1$ be \href{https://www.lmfdb.org/EllipticCurve/Q/56/b/2}{56b1} and $E_2$ be \href{https://www.lmfdb.org/EllipticCurve/Q/392/c/1}{392c1} with respect to Cremona's labelling for elliptic curves. Both elliptic curves are defined over $\Q$, with good ordinary reduction at $3$. The elliptic curves $E_1$ and $E_2$ are associated to normalized newforms $f_1(z)=\sum_{n=1}^\infty a_n(f_1) q^n$ and $f_2(z)=\sum_{n=1}^\infty a_n(f_2) q^n$ respectively of weight $2$ on $\Gamma_0(392)$. Here, $q$ is taken to be $e^{2\pi i z}$. At $p=3$, the residual representations for both $E_1$ and $E_2$ are surjective. \par This can be checked using the following code on the {\tt SageMathCloud}:
\vspace{0.5cm} \begin{verbatim}
E1=EllipticCurve(`56b1')
E2=EllipticCurve(`392c1')
rho1=E1.galois_representation()
rho2=E2.galois_representation()
rho1.image_type(3)
rho2.image_type(3)
\end{verbatim}\vspace{0.5cm} 
Next, we show that the elliptic curves $E_1$ and $E_2$ are $3$-congruent, i.e., $E_1[3]\simeq E_2[3]$ as modules over $\op{Gal}(\bar{\Q}/\Q)$. Since the residual Galois representations at $p=3$ are irreducible, by the Brauer-Nesbitt theorem, it suffices to show that 
\[a_n(f_1)\equiv a_n(f_2)\mod{3}\] for all $n$ coprime to $392$ ($=2^3\times 7^2$). We wish to apply a result of Sturm on congruences between modular forms, and it conveniences us to modify $f_1$ slightly. We set $g_1(z):=f_1(z)-f_1(7z)$, and $g_2(q):=f_2(z)$. Let $a_n(g_i)$ denote the $n$-th Fourier coefficient of $g_i$. Note that $a_n(g_1)=a_n(f_1)$ for all $n$ coprime to $7$. Therefore, in order to show that \[a_n(f_1)\equiv a_n(f_2)\mod{3}\] for all $n$ coprime to $392$, it suffices to show that 
\[a_n(g_1)\equiv a_n(g_2)\mod{3}\] for all $n$. It follows from the result of Sturm \cite{sturm2006congruence} that if this congruence holds for all $n\leq \frac{[\op{SL}_2(\Z):\Gamma_1(392)]}{12}+1$, then, it holds for all $n$. The index of $\Gamma_1(392)$ is given by
\[[\op{SL}_2(\Z):\Gamma_1(392)]=392^2\left(1-\frac{1}{2^2}\right)\left(1-\frac{1}{7^2}\right)=112896.\]
It suffices to check the congruence $a_n(f_1)\equiv a_n(f_2)\mod{p}$ for all $n\leq 112897$. Continuing with the code on {\tt SageMath}, the following returns {\tt True} for the value of the boolean variable {\tt v}.

\vspace{0.5cm} \begin{verbatim}
v=True
for n in range(112898):
    a=E1.an(n)
    b=E2.an(n)
    if n%7==0:
        m=n/7
        a2=E1.an(m)
        a=E1.an(n)-a2
    c=a-b
    d=c%3
    if d!=0:
        v=False
\end{verbatim}\vspace{0.5cm}

Note that $3$ splits in $K$. We check the conditions (1)-(5) of Proposition \ref{prop 3.2}.

\begin{enumerate}
\item Both $E_1$ and $E_2$ have good ordinary reduction at $\p$ and $\p^*$, 
\item $E_1$ and $E_2$ are $3$-congruent, as has been checked by the code above.
\item The following {\tt SageMath} code verifies that $\op{rank} E_1(K)=0$, $\op{rank} E_2(K)=2$:
\vspace{0.5cm} \begin{verbatim}
K = QuadraticField(-5, `a')
A1=E1.base_extend(K)
A1.rank()
A2=E2.base_extend(K)
A2.rank()
\end{verbatim}\vspace{0.5cm} 
\item Since the residual representations 
\[\bar{\rho}_i:\op{Gal}(\bar{\Q}/\Q)\rightarrow \op{Aut}(E_i[3])\xrightarrow{\sim} \op{GL}_2(\F_3)\]are surjective, they remain irreducble when restricted to the index-$2$ subgroup $\op{Gal}(\bar{\Q}/K)$. In particular, $E_i[p]$ does not contain a trivial submodule for the action of $\op{Gal}(\bar{\Q}/K)$. It follows that for $i=1,2$, $E_i(K)[3]=0$.
\item Both $E_1$ and $E_2$ have additive reduction at the prime above $2$ in $K':=K(\mu_3)$, of Kodaira type $\op{I}_1^*$. Therefore, in accordance with Definition \ref{definition Sigma}, the prime above $2$ is not in $\Sigma$. We find that $\Sigma:=\Sigma(E_1)\cup \Sigma(E_2)=\{v_7, v_7^*\}$, where $v_7$ and $v_7^*$ are the primes of $K$ that lie above $7$.  The prime $7$ splits in $K$, and hence the primes $v_7$ and $v_7^*$ are finitely decomposed in $\Kant$. The code is given below, and is a continuation of our {\tt SageMath} code. The field $L$ in the code below is $K'$.

\vspace{0.5cm} \begin{verbatim}
K.<a> = NumberField(x^2+5)
R.<t> = K[]
L.<b> = K.extension(t^2+t+1)
A1=E1.base_extend(L)
A1.local_data()
A2=E2.base_extend(L)
A2.local_data()
\end{verbatim}\vspace{0.5cm} 
\end{enumerate}
The conditions of Theorem \ref{section 3 main thm} are satisfied, in greater detail,
\begin{enumerate}
\item we show that $\Sh(E_1/K)[3^\infty]=0$ by computing the $3$-Selmer group of $E_1$ over $K$ and showing that it equals $0$. We denote this Selmer group by $\op{Sel}_3(E_1/K)$. Let $E_1'$ denote the twist of $E_1$ by $-5$. The code below shows that $\op{Sel}_3(E_1/\Q)$ and $\op{Sel}_3(E_1'/\Q)$ are both $0$.
\begin{verbatim}
E := EllipticCurve("56b1");
ThreeSelmerGroup(E);
Et := EllipticCurve([0,5,0,0,500]);
ThreeSelmerGroup(Et);
\end{verbatim}
The output tells us that these Selmer groups are abelian groups of order $1$. The commands above use $3$-descent and do not assume the strong form of the Birch and Swinnerton-Dyer conjecture. Since the $3$-Selmer groups of $E_1$ and $E_1'$ over $\Q$ are both $0$, it follows that $\op{Sel}_3(E_1/K)=0$ (cf. \cite[Lemma 3.1]{OnoPapa}).
\item The prime $p=3$ splits in $K$, hence, $\F_{\p}\simeq \F_3$. We find that $a_3(E_1)=2$, hence, $\#E_1(\F_3)=3+1-2=2$. Thus, $E_1(\F_3)[3]=0$ and the condition is satisfied. 
\item The Tamagawa product for $E_1$ is $2$ and hence, no Tamagawa number is divisible by $3$.
\item
The values $d_v(E_2)=0$ for all $v\in \Sigma=\{v_7, v_7^*\}$. This is because $E_2$ has additive reduction at the primes $v_7$ and $v_7^*$, and the polynomial $P_v^{E_2}(X)=1$ for $v\in \{v_7, v_7^*\}$. Thus, the quantity $\sigma_{E_2}^{(v)}=0$ for $v\in \Sigma$. The same reasoning shows that $\sigma_{E_2}^{(v_2)}=0$, where $v_2$ is the prime of $K$ above $2$. 
\par We now show that the algorithm \cite[Theorem 2, p. 2133]{brink2007prime} implies that the primes $v_7$ and $v_7^*$ are totally inert in $\Kant$. This will show that $s_{v_7}=s_{v_7^*}=1$. 
 In accordance with notation from \emph{loc. cit.}, set $\Delta:=5$, note that $K=\Q(\sqrt{-\Delta})$. The number $h$ is the class number of $K$, which we express as $h=3^\mu u$, where $3\nmid u$. The quantity $\mu$ should not be confused with the $\mu$-invariant. We thus find that $h=u=2$ and $\mu=0$. Since $\Delta\not\equiv 3\mod{4}$, the algorithm \cite[Theorem 2, p. 2133]{brink2007prime} asks that write $7^2=a^2+5b^2$, where $a$ and $b$ are non-zero integers that are relatively prime. We find that $a=2$ and $b=3$. Set $\omega:=\sqrt{-\Delta}=\sqrt{-5}$ and find that
 \[(a+b\omega)^{3-1}=(2+3\sqrt{-5})^2=-41+12\sqrt{-5}=a^*+\omega b^*,\]
 where $a^*=-41$ and $b^*=12$.
 The number $\nu$ is defined such that $K_H\cap \Kant=K_\nu$, where $K_H$ is the Hilbert class field of $K$. Since the class number is prime to $p$, we find that $\nu=0$. Part (a) of the aforementioned algorithm tells us that a prime $v\in \{v_7, v_7^*\}$ splits in $K_n$ if and only if 
 \[b^*\equiv 0\mod{3^{(n+1+\mu-\nu)}}.\] Since both $\mu$ and $\nu$ are $0$, and $b^*=12$ is not divisible by $3^2$, we find that $v$ must be inert in every field $K_n$, and thus, inert in $\Kant$.

\par Note that $E_1$ has split multiplicative reduction at both $v_7$ and $v_7^*$. For $v\in \{v_7, v_7^*\}$, we find that $\widetilde{P_v^{E_1}}(X)=1-X\in \Z/3\Z[X]$, and thus, $7^{-1}=1$ is a simple root in $\Z/3\Z$. As a result, $d_v(E_1)=1$, and we find that $\sigma_{E_1}^{(v_7)}=\sigma_{E_1}^{(v_7^*)}=1$. We find that 
\[2=\op{rank} E_2(K)=\sum_{v\in \Sigma} \left(\sigma_{E_1}^{(v)}-\sigma_{E_2}^{(v)}\right).\]
\end{enumerate}

\begin{proof}[Proof of Theorem \ref{theorem main aux}]
We have checked that all conditions of Theorem \ref{section 3 main thm} are satisfied for $(E_1, E_2, K,p)$, and thus, the result follows.
\end{proof}
\bibliographystyle{alpha}
\bibliography{references}
\end{document}